  \pgfplotsset{compat=newest}
\newlength\figureheight
  \newlength\figurewidth
\pgfplotsset{%
    tick label style={font=\scriptsize},
    label style={font=\footnotesize},
    legend style={font=\footnotesize},
   	every axis plot/.append style={very thick}
}
\newcommand{\E}{\mathbb{E}}
\newcommand{\PR}{\mathbb P}
\newtheorem{proposition}{Proposition}[section]
\DeclareMathOperator{\Var}{Var}
\newcommand{\ee}{\mathbb E}
\newcommand{\nn}{\mathbb N}
\newcommand{\pp}{\mathbb P}
\newcommand{\rr}{\mathbb R}
\newcommand{\cf}{\mathscr F}
\theoremstyle{remark}
\newtheorem{rem}{Remark}
\newcommand{\ve}{\varepsilon}
\newcommand{\vb}{\vspace{3.2mm}}
\newcommand{\s}{^\star}
\newcommand*\longbar[1]{%
  \hbox{%
    \vbox{%
      \hrule height 0.4pt 
      \kern0.4ex
      \hbox{%
       \kern-0.04em
        \ensuremath{#1}%
        \kern-0.04em
      }%
    }%
  }%
}
\renewcommand{\check}{\widetilde}
\renewcommand{\overline}{\longbar}
\renewcommand{\bar}{\overline}
\renewcommand{\hat}{\widehat}
\newcommand{\Pois}{\mathrm{Pois}}
\newcommand{\sumx}{\sum_{i=1}^{N} X_i}
\newcommand{\barxa}{\overline X_{N^\alpha}}
\newcommand{\barx}{\overline X_{N}}
\newcolumntype{C}[1]{>{\centering\arraybackslash}p{#1}}
\begin{document}

\title[Rare-event analysis of mixed Poisson random variables]{Rare-event analysis of mixed Poisson random variables,\\
and applications in staffing}

\author{Mariska Heemskerk, Julia Kuhn, Michel Mandjes}

\begin{abstract}
A common assumption when modeling queuing systems is that arrivals behave like a Poisson process with constant parameter. In practice, however, call arrivals are often observed to be significantly overdispersed. This motivates that in this paper we consider a {\it mixed} Poisson arrival process with arrival rates that are resampled every $N^{-\alpha}$ time units, where $\alpha>0$ and $N$ a scaling parameter.

In the first part of the paper we analyse the asymptotic tail distribution of this doubly stochastic arrival process. That is, for large $N$ and i.i.d.\ arrival rates $X_1,\ldots,X_N$, we focus on the evaluation of the probability that the scaled number of arrivals exceeds $Na$,
\[P_N(a):= {\mathbb P}\left({\rm Pois}\left(N \overline{X}_{N^\alpha}\right)\geqslant Na\right),\:\:\:\mbox{with}\:\:\:
\overline X_N:= \frac{1}{N}\sum_{i=1}^ NX_i.\]
The logarithmic asymptotics of $P_N(a)$ are easily obtained from previous results; we find constants $r_P$ and $\gamma$ such that $N^{-\gamma}\log P_N(a) \to -r_P$ as $N\to\infty$. 
Relying on elementary techniques, we then derive the exact asymptotics of $P_N(a)$: For $\alpha<\frac{1}{3}$ and $\alpha>3$ we identify (in closed-form) a function $\check{P}_N(a)$  such that $P_N(a)/\check{P}_N(a)$ tends to $1$ as $N\to\infty$.  For $\alpha\in[\frac{1}{3},\frac{1}{2})$ and $\alpha\in[2,3)$ we find a partial solution in terms of an asymptotic lower bound. 
For the special case that the $X_i$\,s are gamma distributed, we establish the exact asymptotics across all $\alpha>0$. In addition, we set up an asymptotically efficient importance sampling procedure that produces reliable estimates at low computational cost.

The second part of the paper considers an infinite-server queue assumed to be fed by such a mixed Poisson arrival process. Applying a scaling similar to the one in the definition of $P_N(a)$, we focus on the asymptotics of the probability that the number of clients in the system exceeds $Na$. 
The resulting approximations can be useful in the context of staffing. Our numerical experiments show that, astoundingly, the required staffing level can actually {\it decrease} when service times are more variable. 
\end{abstract}

\maketitle

\newpage

\section{Introduction}

In communications engineering it is increasingly accepted that traditional Poisson processes do not succeed
in capturing the variability that is typically observed in real call arrival processes \cite{kim2014call,whitt2007coping}. 
This led to the idea to instead use Cox processes \cite{cox1955cox} to model arrivals, i.e., Poisson processes in which the arrival rate follows some (non-negative) stochastic process. 
Perhaps the simplest choice, advocated in \cite{heemskerk2016scaling}, is to {\it resample} the arrival rate (in an i.i.d.\ manner) every $\Delta$ units of time;
during the resulting time intervals the arrival rate is assumed constant.
We denote these i.i.d.\ arrival rates by $(X_i)_{i\in\nn}$. This paper studies two settings in which such an overdispersed arrival process is featured.

\vb

1.\ {\it Number of arrivals}. 
We start by studying the tail asymptotics of the total number of arrivals in a time interval of given length.
We do so in a scaling regime that was proposed in \cite{heemskerk2016scaling}, in which the arrival rates and sampling frequency are jointly inflated as follows. 
In the first place, it is natural to assume that arrival rates are large,
as these represent the contributions of many potential clients; this can be achieved by letting these arrival rates be 
$NX_1,NX_2,\ldots$ for i.i.d.\ $(X_i)_{i\in\nn}$ and some large $N$. 
In addition, the sampling frequency is set to $N^{\alpha}$ (assumed to be integer) and hence the size of each time slot is assumed to be $\Delta = N^{-\alpha}$. 
Evidently, the larger $\alpha$, the more frequently the arrival rate is resampled.

The focus is on the probabilities $P_N(a)$ and $p_N(a)$, where
\[P_N(a):= {\mathbb P}\left({\rm Pois}\left(N \overline{X}_{N^\alpha}\right)\geqslant Na\right),\:\:\:\mbox{with}\:\:\:
\overline X_N:= \frac{1}{N}\sum_{i=1}^ NX_i,\]
and $p_N(a)$ denotes the corresponding probability that the  mixed Poisson random variable equals $Na$
(assumed to be integer).
We consider the situation that $a$ is larger than $\nu:={\mathbb E}X_i$,
which entails that the event under consideration is rare and that we are in the framework of large deviations theory. 

We would like to stress the important role that is played by the time-scale parameter $\alpha>0$.
One could image that in a rapidly changing environment, the inherent overdispersion of the arrival process hardly plays a role, whereas in a slowly changing random environment, overdispersion is expected to be more dominant.
Hence the parameter $\alpha$ can be tweaked in order to match any real-world scenario in that sense.
That is, if $\alpha$ is large, since the arrival rate is resampled relatively frequently,  it is anticipated that the mixed Poisson random variable behaves Poissonian with parameter $N\nu$. 
If on the contrary $\alpha$ is small, one would expect that detailed characteristics of the distribution of the $X_i$ matter. For $\alpha=1$ both effects play a role.
This intuition underlies nearly all results presented in this paper.

\vb

2.\ {\it Number of customers in an infinite-server queue.} In the second part of 
this paper we focus on a
cornerstone model in the design and performance evaluation of communication networks: the {\it infinite-server queue}. 
This model can be 
used to produce approximations for many-server systems.  

In our paper, the arrival process is the overdispersed process 
we introduced above, and the service times are i.i.d.\ samples from a (non-negative) distribution with
distribution function $F(\cdot)$.
The number of clients in this infinite-server queue, under the arrival process described above, is studied in \cite{heemskerk2016scaling}.  
As it turns out, one can prove the (conceivable) property that the number of
clients in the system at time $t$ (which we, for simplicity, assume to be a multiple of $\Delta)$, has a {\it mixed Poisson} distribution, 
i.e., a Poisson distribution with random parameter.  
This parameter is given by  
\[\sum_{i=1}^{t/\Delta} X_i\,\Delta \, f_i(t,\Delta),\]
where $f_i(t,\Delta)$ denotes the probability that a call arriving at a uniformly distributed epoch in the interval $[(i-1)\Delta,i\Delta)$
is still in the system at time $t$. 
Evidently, for small $\Delta$  this probability essentially behaves as $\overline F(t-i\Delta)$,
with $\overline F(\cdot):=1-F(\cdot)$ denoting the complementary distribution function.

We renormalize time such that $t\equiv 1$ (which can be done without loss of generality), and again impose the scaling along the lines of \cite{heemskerk2016scaling}: the arrival rates are $NX_i$ and the interval width $N^{-\alpha}$. Then the number of clients in the system is Poisson with random parameter 
\begin{equation}\label{scaledparameter}
\sum_{i=1}^{N^\alpha} (NX_i)\,N^{-\alpha}\, f_i(1,N^{-\alpha})=N^{1-\alpha}\sum_{i=1}^{N^\alpha}X_i\,\omega_i(N^{\alpha}),
\end{equation}
where $\omega_i(N):= f_i(1,N^{-1})\approx \overline F(1-i/N).$
A clearly relevant object of study concerns
the probability that the number of clients in the system exceeds some threshold $Na$:
\begin{equation}\label{def:Q}
Q_N(a) := {\mathbb P}\left({\rm Pois}\left( N^{1-\alpha} \sum_{i=1}^{N^\alpha} X_i \,\omega_i(N^{\alpha})\right)\geqslant Na\right);
\end{equation}
$q_N(a)$ denotes the corresponding probability that the  mixed Poisson random variable equals $Na$. 
To ensure that the event under consideration is rare,  $a$ is assumed to be larger than
\[\frac{\nu}{N^{\alpha}} \sum_{i=1}^{N^\alpha}\,\omega_i(N^{\alpha})
\approx 
\frac{\nu}{N^{\alpha}} \sum_{i=1}^{N^\alpha} \overline{F}(1-i/N^\alpha)
\approx \nu \int_0^1 \overline{F}(x){\rm d}x.\] 

A related question of practical interest concerns {\it staffing}: how many servers should be allocated to ensure a given service level for customers or jobs arriving according to a mixed Poisson process in a random environment?
Approximating the many-server model by its infinite-server counterpart, 
we approach this classical problem in queueing theory as an asymptotic dimensioning problem: we want to find the smallest $a$ such that $Q_N(a)$ (or $q_N(a)$) is below some desired (typically small) $\varepsilon$ as $N$ tends to infinity (cf.\ \cite{BMR04}).
The resulting procedure has applications in the context of call centers, cloud computing or in the design of data centers \cite{patch2016online,LMS16}.
 Related literature on (dynamic) staffing procedures in such settings is, e.g., \cite{JMMW96, Whitt99, whitt2007coping}; see also the recent review \cite{defraeye2016staffing} and the references therein. Previous work that addresses overdispersion in the arrival process includes \cite{BRZ10,HLW15,jongbloed2001managing}.
Our approach in this paper is different to earlier work in that it uses exact asymptotics to approximate the objective function that we want to minimize in the staffing problem.
As we focus on the large-deviations setting, the technique we develop is specifically useful in the regime in which the performance requirements are strict (i.e., the probability of service degradation should be kept low).

\vb

We now comment on the type of results we establish in this paper.
As is common in the literature, we first consider {\it logarithmic asymptotics}, that is, we identify 
a constant $r_Q>0$ (that depends on $a$) such that, for $\gamma:= \min\{\alpha,1\}$, 
\begin{equation}\label{LD1}\lim_{N\to\infty}
\frac{1}{N^\gamma}\log Q_N(a)=-r_Q.\end{equation}
This is easily done by using the techniques from \cite{heemskerk2016scaling}.

These logarithmic asymptotics provide useful insight into the decay of the probabilities of interest,
but it should be noted that they are inherently imprecise. 
More specifically, they suggest that one could use `naive' approximations of the form
\[P_N(a) \approx {\rm e}^{- r_PN^{\gamma}},\:\:\:Q_N(a) \approx {\rm e}^{- r_QN^{\gamma}}\]
for $N$ large.
It is important to notice, however, that (\ref{LD1}) only entails that $P_N(a) = \xi(N) \exp(- r_PN^{\gamma})$, with $\xi(\cdot)$ being subexponential in the sense that 
\[\lim_{N\to\infty}\frac{1}{N^{\gamma}}\log \xi(N)=0.\]
In other words, it does not rule out that, for instance, $\xi(N) =10^{10}$, or $N^M$ for some given $M$, or even $\exp(N^{0.99\,\gamma})$. 
This motivates the interest in {\it exact asymptotics}. Here, the objective is to identify a function $\check P_N(a)$ such that $\check P_N(a)/ P_N(a)\to 1$ as $N\to\infty$ (which we denote throughout the paper by $P_N(a)\sim \check P_N(a)$), leading to the evident approximation $P_N(a)\approx \check P_N(a)$. 
Along the same lines we would like to find the exact asymptotics $\check Q_N(a)$ for the probability $Q_N(a)$. 

\vb

The contributions and organization of our paper are as follows. 
In Sections \ref{sec:AboutP}, \ref{sec:exampleP} and \ref{sec:IS} we focus on the evaluation of the probabilities $P_N(a)$ and $p_N(a)$. 
After having introduced the notation, in Section \ref{sec:AboutP} we first briefly present the logarithmic asymptotics. 
We then use elementary techniques to derive the exact asymptotics, however, as it turns out, these only apply when the time scales of the arrival process and the resampling are sufficiently separated: we address the cases $\alpha<\frac{1}{3}$ and $\alpha>3$ (with a partial solution for $\alpha\in[\frac{1}{3},\frac{1}{2})$ and $\alpha\in[2,3)$ in terms of an asymptotic lower bound).

In Section \ref{sec:exampleP} it becomes clear why such elementary techniques do not work across all values of $\alpha$: for the important special case of the $X_i$ corresponding to i.i.d.\ gamma distributed random variables \cite{jongbloed2001managing} we find the exact asymptotics for all $\alpha>0$, and in the range $(\frac{1}{2},2)\setminus\{1\}$ these turn out to have a rather intricate shape.

Section \ref{sec:IS} focuses on rare-event simulation as a means to find an accurate approximation at relatively low computational cost: we propose an importance-sampling based technique, which we prove to be asymptotically efficient.

In Section \ref{sec:AboutQ} we shift our attention to the probabilities $Q_N(a)$ and $q_N(a)$. 
Again, logarithmic asymptotics can be found, and in addition we manage to identify the exact asymptotics for the case $\alpha=1$. By a series of numerical examples it is illustrated how the resulting approximation can be used for staffing purposes. We performed extensive experiments, and make the striking observation that increasing the variability of the service times (e.g.\ Pareto service times rather than exponential ones) often leads to less conservative staffing rules. 


\section{Asymptotics of $P_N(a)$}\label{sec:AboutP}

We start by introducing the framework that we consider throughout the paper.
In our setup we let $(X_i)_{i\in\nn}$ be a sequence of i.i.d.\ random variables distributed as a generic random variable $X$,
where $\nu:=\ee X_i$. 
Assume that the moment-generating function of $X$, denoted by $M_X(\vartheta):=\ee\left[{\rm e}^{\vartheta X}\right]$, is finite in an open set containing the origin. 
The {\it Fenchel-Legendre transform} (or convex conjugate) of the cumulant-generating function $\Lambda_X(\vartheta):=\log M_X(\vartheta)$ is defined as
\begin{equation}\label{def:J}
I_X(a):=\sup_{\vartheta\in\rr} \left\{ \vartheta a-\Lambda_X(\vartheta)\right\}.
\end{equation} 
We assume that the optimizing $\vartheta$ in (\ref{def:J}) indeed exists, and we denote it by $\vartheta_X^\star$ (thus suppressing that $\vartheta_X^\star$ actually depends on $a$). Under these conditions, it is known that the sample mean $\barx:=N^{-1}\sumx$ satisfies a large deviations principle with {\it rate function} $I_X(\cdot)$ (see, e.g.,  \cite{DZ1998}). Furthermore, a result by Bahadur and Rao \cite{bahadurrao} states that we have the following exact asymptotics for $\barx$: when $a>\nu$,
\begin{equation}
\label{BR1}
\lim_{N\to\infty} {\mathbb P}\left(\barx \geqslant a\right) {\rm e}^{N\,I_X(a) }\sqrt{N} = C_X(a).
\end{equation}
We assume that $X$ is non-lattice, in which case $C_X(\cdot)$ takes the form
\begin{equation}\label{def:K}
C_X(a)=\frac{1}{\vartheta_X^\star\sqrt{2\pi\Lambda''_X(\vartheta_X^\star)}},
\end{equation}
where $\Lambda_X''(\vartheta_X^\star)$ denotes the second derivative of $\Lambda_X(\vartheta)$ evaluated at $\vartheta_X^\star$; if $X$ is lattice, the constant $C_X(a)$ should be defined slightly differently \cite[Thm.\ 3.7.4]{DZ1998}.
There is also a local limit version of~(\ref{BR1}): with $\xi_N(\cdot)$ the density of $\sum_{i=1}^N X_i$, from \cite{petrov},
\begin{equation}
\label{BR1a}
\lim_{N\to\infty} \xi_N(Na)\, {\rm e}^{N I_X(a)} \sqrt{N} = C_X(a)I_X'(a).
\end{equation}

In our analysis the tail asymptotics of Poisson random variables play a crucial role. 
We note that the Bahadur-Rao asymptotics entail that for the probabilities
\begin{equation}\label{def:psi}
\psi_N(a\,|\,x):={\mathbb P}\left({\rm Pois}\left(N x\right)\geqslant Na\right),
\end{equation} 
it holds that
\begin{equation}
\label{BR2}
\lim_{N\to\infty} \psi_N(a\,|\,x) {\rm e}^{N\,I(a\,|\,x)} \sqrt{N} = C(a\,|\,x),\end{equation}
for $a>x$.
Here, $I(\cdot\,|\,x)$ is the rate function associated with a Poisson random variable  with parameter $x$, that is, $I(\cdot\,|\,x)$ is the Fenchel-Legendre transform of the cumulant-generating function $\Lambda(\vartheta)=x({\rm e}^\vartheta-1)$ of the Poisson random variable. 
Inserting the optimizer $\vartheta^\star=\log(a/x)$, this yields $I(a\,|\,x) =   a \log(a/x)- a +x$. 
Bearing in mind that the Poisson variable is lattice, it turns out that the function $C(a\,|\,x)$ takes the form ({\it cf.}~(\ref{def:K}))
\begin{align*}
C(a\,|\,x):= \frac{1}{1-\exp\left(\vartheta^\star\right)}\,\frac{1}{\sqrt{2\pi\Lambda''(\vartheta^\star)}}=\frac{1}{1-a/x}\,\frac{1}{\sqrt{2\pi a}}.
\end{align*}

Let us first present the logarithmic asymptotics of $P_N(a)$ (the same logarithmic asymptotics hold for $p_N(a)$). Here we merely state the results as the proof is exactly as in \cite[Section 4.1]{heemskerk2016scaling}. We distinguish between the cases $\alpha>1$ and $\alpha<1$; the former case we refer to as the {\it fast} regime as the $X_i$'s are sampled relatively frequently, whereas the latter case is the {\it slow} regime. For completeness, the logarithmic asymptotics for the intermediate case $\alpha = 1$, though standard, are included as well.

\begin{itemize}
\item[$\circ$] 
In the fast regime $N^\alpha$ is substantially larger than $N$, and hence the rare event will be essentially due to $\barxa$ being close to $\nu$, and the Poisson random variable with parameter (roughly) $N\nu$ exceeding $Na$.
Accordingly, following the argumentation in  \cite{heemskerk2016scaling}, one obtains
\[\lim_{N\to\infty} \frac{1}{N}\log P_N(a) = -I(a\,|\,\nu).\]
This result entails that $P_N(a)$ decays essentially exponentially. 
\item[$\circ$] In the slow regime, assuming the support of $X_i$ is unbounded, the rare event will be a consequence of the joint effect of (i)~$\barxa$ being close to $a$, and (ii)~the Poisson variable with parameter (roughly) $Na$ attaining a typical value; the first event is rare, but the second is not. 
In this regime, we thus have
\[\lim_{N\to\infty} \frac{1}{N^\alpha}\log P_N(a) = -I_X(a);\]
observe that this corresponds to subexponential decay.
\item[$\circ$] For $\alpha=1$, the random variable ${\rm Pois}\left(N \overline{X}_{N^\alpha}\right)$ can be written as the sum of $N$ i.i.d.\ contributions, each of them distributed as $Z:={\rm Pois}(X)$. 
Noting that
\[\log {\mathbb E}\exp \left(\vartheta Z\right) = \Lambda_X({\rm e}^\vartheta -1),\]
a straightforward application of Cram\'er's theorem \cite {DZ1998} yields that the decay is exponential:
\begin{equation}
\label{IZ}\lim_{N\to\infty} \frac{1}{N}\log P_N(a) = -\sup_\vartheta\left(\vartheta a - \Lambda_X({\rm e}^\vartheta -1)\right)=:I_Z(a).
\end{equation}
\end{itemize}

In the remainder of this section we show that for a range of values of $\alpha$ the exact asymptotics of $P_N(a)$ and $p_N(a)$ can be found relying on elementary probabilistic techniques. 
We focus on the fast regime in Section \ref{sec:AboutP:fast}, and on the slow regime in Section \ref{sec:AboutP:slow}. We conclude with the exact asymptotics for the intermediate case  $\alpha= 1$, which follow directly from the Bahadur-Rao result; see Section 2.3. 

\subsection{Fast regime}\label{sec:AboutP:fast}

In this section we assume that $\alpha>1$. We start by proving an upper bound for $P_N(a)$.
In self-evident notation,  we have
\begin{equation}\label{decomp}
P_N(a)=\int_{0}^\infty
\psi_N(a\,|\,x)\,{\mathbb P}\left(\barxa \in {\rm d}x\right),
\end{equation}
with $\psi_N(a\,|\,x)$ as defined in (\ref{def:psi}).
For any $\delta$, Eqn.\ (\ref{decomp}) is majorized by
 \begin{equation}
\label{split}
\int_{\nu-N^\delta}^{\nu+N^\delta}
\psi_N(a\,|\,x)\, {\mathbb P}\left(\barxa\in {\rm d}x\right)+ {\mathbb P}\left(\left|\,\barxa-\nu\,\right|\,\geqslant N^\delta\right);
\end{equation}
we determine an appropriate value for $\delta$ later on. The first term in  (\ref{split}) is evidently bounded from above by $\psi_N(a\,|\,\nu+N^\delta)$.
Motivated by (\ref{BR2}), 
we will show that, as $N\to\infty$, 
\begin{equation}\label{UB1}
\psi_N(a\,|\,\nu+N^\delta)  \,{\rm e}^{N\,I(a\,|\,\nu)} \sqrt{N}\to C(a\,|\,\nu),
\end{equation}
whereas the second term in (\ref{split}) turns out to be asymptotically negligible.

To verify that (\ref{UB1}) holds, note that $C(a\,|\,\nu)/C(a\,|\,\nu+N^\delta)\to 1$ when $\delta<0$, which follows by a standard continuity argument.
We therefore proceed by considering
$N\,I(a\,|\,\nu)-N\,I(a\,|\,\nu+N^\delta),$
which behaves as
\begin{eqnarray*}
\lefteqn{\hspace{-0.2cm}N \left(a\log\frac{a}{\nu}+a-\nu\right)- N\left(a\log\frac{a}{\nu+N^\delta}+a-(\nu+N^\delta)\right)}\\
&=& Na\log \left(1+\frac{N^\delta}{\nu}\right) + N^{1+\delta}=\left(\frac{a}{\nu}+1\right)N^{1+\delta}+ 
O(N^{1+2\delta})\to 0
\end{eqnarray*}
if $\delta<-1$. Thus, for such $\delta$ we have established (\ref{UB1}). 

Now consider the second term of (\ref{split}), and, more specifically,
\begin{equation}\label{split:2}
{\mathbb P}\left(\left|\,\barxa-\nu\,\right|\,\geqslant N^\delta\right)
 \,{\rm e}^{N\,I(a\,|\,\nu)} \sqrt{N},
 \end{equation}
for $N\to\infty$. 
Due to a Chernoff bound, we have
\[
\PR\left(\,\barxa \geq \nu + N^{\delta}\right) \leqslant \exp\left({- N^{\alpha}\sup_{\vartheta} \left({\vartheta}(\nu + N^{\delta}) - \log \E \,{\rm e}^{\vartheta X_i } \right)}\right)={\rm e}^{-N^{\alpha}I_X(\nu + N^{\delta})},
\]
and hence (\ref{split:2}) is majorized by
\[{\rm e}^{-N^\alpha I_X(\nu+ N^\delta)} \,{\rm e}^{N\,I(a\,|\,\nu)} \sqrt{N} + {\rm e}^{-N^\alpha I_X(\nu- N^\delta)} \,{\rm e}^{N\,I(a\,|\,\nu)} \sqrt{N}.\]
Now realize that $I_X(\nu+ N^\delta) = \frac12 I_X''(\nu) N^{2\delta}+O(N^{3\delta})$ and similarly for $I_X(\nu- N^\delta)$. Thus, the expression from the previous display vanishes when $\alpha+2\delta>1$, or, equivalently, $\delta>(1-\alpha)/2$, where $(1-\alpha)/2<0$ since $\alpha>1$.

We note that the requirements $\delta<-1$ (corresponding to the first term) and $\delta>(1-\alpha)/2$ (corresponding to the second term) are both fulfilled when $\alpha>3$. Thus, we have shown that for $\alpha>3$ an asymptotic upper bound for $P_N(a)$ is given by (\ref{UB1}).

Let us now turn to the corresponding lower bound. The probability of interest majorizes 
\[\psi_N(a\,|\,\nu-N^\delta)\,\int_{\nu-N^\delta}^{\nu+N^\delta}
 {\mathbb P}\left(\barxa \in {\rm d}x\right).\]
As above, we can check that for $\delta<-1$,
\[\psi_N(a\,|\,\nu-N^\delta)  \,{\rm e}^{N\,I(a\,|\,\nu)} \sqrt{N}\to C(a\,|\,\nu),\]
and, by the Bahadur-Rao result (\ref{BR1}),
\[\int_{\nu-N^\delta}^{\nu+N^\delta}
 {\mathbb P}\left(\barxa \in {\rm d}x\right)\sim
 1- 2\exp\left(- \frac12 I_X''(\nu)N^\alpha N^{2\delta}\right)\to 1,\]
 when $\delta>-\alpha/2.$ This can be realized when $\alpha>2$ (and is hence fulfilled when $\alpha>3$ as well).
 This proves the lower bound.

 Combining the upper and lower bounds, we thus find the following result.
 \begin{proposition}\label{thm:alphageq1} 
 For $\alpha >3$, as $N\to\infty$,
\[ P_N(a) \sim \,{\rm e}^{-N\,I(a\,|\,\nu)}\frac{ C(a\,|\,\nu)}{ \sqrt{N}}.\]
For $\alpha \in (2,3]$, 
\[\liminf_{N\to\infty}P_N(a) \,{\rm e}^{N\,I(a\,|\,\nu)} \sqrt{N}\geqslant C(a\,|\,\nu).\]
\end{proposition}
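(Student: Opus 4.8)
The plan is to glue together the two one-sided bounds that emerge from the decomposition $P_N(a)=\int_0^\infty \psi_N(a\,|\,x)\,\PR(\barxa\in\rmd x)$, keeping a free exponent $\delta<0$ which I will tune at the end against $\alpha$. For the upper bound I would split the integral at the points $\nu\pm N^\delta$: since $\psi_N(a\,|\,\cdot)$ is increasing, the central part is at most $\psi_N(a\,|\,\nu+N^\delta)$, and the outer part is exactly the tail probability $\PR(|\barxa-\nu|\ge N^\delta)$. The central piece I would handle by combining three ingredients: (i) the Bahadur--Rao asymptotics for Poisson, (\ref{BR2}), at parameter $\nu+N^\delta$; (ii) the continuity of $C(a\,|\,\cdot)$, giving $C(a\,|\,\nu+N^\delta)/C(a\,|\,\nu)\to1$ for $\delta<0$; and (iii) a Taylor expansion of $x\mapsto I(a\,|\,x)$ at $\nu$, which shows $N\bigl(I(a\,|\,\nu)-I(a\,|\,\nu+N^\delta)\bigr)=O(N^{1+\delta})\to0$ precisely when $\delta<-1$. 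Together these yield $\psi_N(a\,|\,\nu+N^\delta)\,\rmd^{N I(a\,|\,\nu)}\sqrt N\to C(a\,|\,\nu)$.

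For the tail piece I would invoke a Chernoff bound on $\barxa$, which is a sample mean of $N^\alpha$ i.i.d.\ copies of $X$, so $\PR(\barxa\ge\nu+N^\delta)\le \rmd^{-N^\alpha I_X(\nu+N^\delta)}$ and symmetrically on the other side. Taylor-expanding $I_X$ at $\nu$ (where $I_X(\nu)=I_X'(\nu)=0$) gives $I_X(\nu\pm N^\delta)\sim\tfrac12 I_X''(\nu)N^{2\delta}$, so after multiplication by the normalizing factor $\rmd^{N I(a\,|\,\nu)}\sqrt N$ the tail contribution vanishes as soon as $\alpha+2\delta>1$, i.e.\ $\delta>(1-\alpha)/2$. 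The two requirements $\delta<-1$ and $\delta>(1-\alpha)/2$ can be met simultaneously exactly when $\alpha>3$, and in that range I obtain $\limsup_{N\to\infty}P_N(a)\,\rmd^{N I(a\,|\,\nu)}\sqrt N\le C(a\,|\,\nu)$.

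For the matching lower bound I would instead estimate $P_N(a)$ from below by $\psi_N(a\,|\,\nu-N^\delta)\,\PR\bigl(\barxa\in[\nu-N^\delta,\nu+N^\delta]\bigr)$. The same expansion argument as above, applied now at the left endpoint, gives $\psi_N(a\,|\,\nu-N^\delta)\,\rmd^{N I(a\,|\,\nu)}\sqrt N\to C(a\,|\,\nu)$ whenever $\delta<-1$; and the Bahadur--Rao estimate (\ref{BR1}) for $\barxa$ shows $\PR(\barxa\in[\nu-N^\delta,\nu+N^\delta])\to1$ provided $N^\alpha N^{2\delta}\to\infty$, i.e.\ $\delta>-\alpha/2$. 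These constraints $\delta<-1$ and $\delta>-\alpha/2$ are compatible when $\alpha>2$, yielding $\liminf_{N\to\infty}P_N(a)\,\rmd^{N I(a\,|\,\nu)}\sqrt N\ge C(a\,|\,\nu)$. Combining: for $\alpha>3$ the upper and lower bounds coincide and the $\sim$ statement follows, while for $\alpha\in(2,3]$ only the lower bound survives, which is the second assertion.

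The main obstacle is the tension already visible in the exponent bookkeeping: the window around $\nu$ must shrink fast enough ($\delta<-1$) that the Poisson rate function barely moves over it on the scale $N$, yet slowly enough ($\delta>(1-\alpha)/2$, resp.\ $\delta>-\alpha/2$) that the fluctuations of $\barxa$ — which concentrate on the scale $N^{-\alpha/2}$ — stay inside the window. Making this precise requires that the Taylor remainders for both $I(a\,|\,\cdot)$ and $I_X(\cdot)$ be controlled uniformly over the shrinking interval, and it is exactly the impossibility of reconciling these two competing demands for smaller $\alpha$ that prevents this elementary route from covering $\alpha\le3$ (resp.\ $\alpha\le2$).
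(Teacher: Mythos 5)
Your proposal is correct and follows essentially the same route as the paper: the same decomposition of $P_N(a)$ over $\barxa$, the same shrinking window $\nu\pm N^\delta$ with the Taylor expansions of $I(a\,|\,\cdot)$ and $I_X(\cdot)$, the Chernoff bound for the tail, and the same bookkeeping $\delta<-1$ versus $\delta>(1-\alpha)/2$ (upper bound, $\alpha>3$) and $\delta>-\alpha/2$ (lower bound, $\alpha>2$). Only a typographical slip: where you write the exponential factor you use the differential symbol rather than ${\rm e}$, but the mathematics is the paper's argument.
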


\begin{rem}\label{REM0}
This result is in accordance with the intuition we gave at the beginning of the section -- in the fast regime the asymptotics
of $P_N(a)$ depend on the distribution of the $X_i$ only through their mean $\nu$. This also gives an indication as to why the asymptotics for $\alpha$ closer to $1$ may be more delicate to deal with. One can imagine that for more moderate values of $\alpha$ the result may not be precise enough, and that also large deviations coming from $\overline X_{N^\alpha}$ may play a role in that regime. This is confirmed in Section \ref{sec:exampleP}, where we consider an example with $X_i\sim \mbox{Exp}(\lambda)$. It turns out that the exact asymptotic expression for $\alpha\in(1,2)$ is indeed more intricate than the expression provided in Thm.\ \ref{thm:alphageq1}.\hfill$\diamondsuit$
\end{rem}

\begin{rem} \label{REM1}
Along the same lines the asymptotics for $p_N(a)$ can be found. They turn out to be, for $\alpha>3$, as $N\to\infty$,
\[ p_N(a) \sim {\rm e}^{-N\,I(a\,|\,\nu)}\frac{C(a\,|\,\nu)}{\sqrt{N}}\left(1-{\rm e}^{-I'(a\,|\,\nu)}\right).\]
This is in line with the result of Prop.\ \ref{thm:alphageq1}: informally,
\begin{eqnarray*}p_N(a) &=& P_N(a)-P_N(a+1/N)\\
&\approx& \frac{C(a\,|\,\nu)}{\sqrt{N}}{\rm e}^{-N\,I(a\,|\,\nu)} - \frac{C(a+1/N\,|\,\nu)}{\sqrt{N}}{\rm e}^{-N\,I(a+1/N\,|\,\nu)}\\&\approx& \frac{C(a\,|\,\nu)}{\sqrt{N}} {\rm e}^{-N\,I(a\,|\,\nu)}\left(1-{\rm e}^{-I'(a\,|\,\nu)}\right),
\end{eqnarray*}
for large $N$, based on elementary Taylor arguments.\hfill$\diamondsuit$
\end{rem}

\subsection{Slow regime}\label{sec:AboutP:slow}

We now consider the slow regime, i.e., $\alpha<1.$ We have to distinguish between two cases.
\begin{itemize}
\item[$\circ$]
In Case I we assume that  $X_i$ may have outcomes larger than $a$  with positive probability:
\[b_+:=\sup\{b: {\mathbb P}(X_i>b)>0\}>a;\]
as a consequence $I_X(a)<\infty.$ Recall that in this case, for $N^\alpha$ substantially smaller than $N$, it can be argued that $P_N(a)$ essentially behaves as ${\mathbb P}(\overline{X}_{N^\alpha} \geqslant a)$.
\item[$\circ$] In Case II we consider the opposite situation:
$b_+<a.$
Then the intuition is that the rare event under consideration is the consequence of large deviations of both random components: of (i)~$\barxa$ being close to $b_+$, and (ii)~the Poisson variable with parameter (roughly) $Nb_+$ attaining the atypical value $Na$. 
\end{itemize}

{\it Case I}. We start by establishing an upper bound. Note that $P_N(a)$ is majorized by
\[ {\mathbb P}\left(\barxa \geqslant a-N^\delta\right)
+ \psi_N(a\,|\,a-N^\delta).\]
Due to the Bahadur-Rao result stated in (\ref{BR1}), the first term is asymptotically equivalent to
\[N^{-\alpha/2} C_X(a-N^\delta) {\rm e}^{-N^\alpha I_X(a-N^\delta)},\]
which behaves as $N^{-\alpha/2} C_X(a) {\rm e}^{-N^\alpha I_X(a)}$ when $\delta<-\alpha$ (as a direct consequence of  the standard expansion
$I_X(a-N^\delta) = I_X(a)- N^\delta I_X'(a) + O(N^{2\delta})$). 
In addition, again using the Chernoff bound, we have
\begin{equation}\label{UB2}
{\rm e}^{N^\alpha I_X(a)} \,\psi_N(a\,|\,a-N^\delta) \leqslant {\rm e}^{N^\alpha I_X(a)} \exp\left(-N\left(a\log\frac{a}{a-N^\delta}+N^\delta\right)\right).\end{equation}
Observe that the exponent in the second factor of the right hand side of (\ref{UB2}) behaves as $N^{2\delta+1}$. We conclude that (\ref{UB2}) vanishes if $2\delta+1>\alpha$, or, equivalently, $\delta>(\alpha-1)/2$ (note that $(\alpha-1)/2<0$). In order to simultaneously meet $\delta<-\alpha$ and $\delta>(\alpha-1)/2$, we need to have $\alpha<\frac{1}{3}.$

We now turn to the lower bound. The probability of interest is bounded from below by
\[\psi_N(a\,|\,a+N^\delta)\,{\mathbb P}\left(\barxa \ge a+N^\delta\right).\]
The first factor is bounded from below by $1$ minus a term that decays as $\exp(-N^{1+2\delta})$ (which goes to $1$ when $\delta>-\frac{1}{2}$),
whereas the second behaves as $N^{-\alpha/2} C_X(a) {\rm e}^{-N^\alpha I_X(a)}$ when $\delta<-\alpha$. In other words, there is an appropriate $\delta$  for all $\alpha<\frac{1}{2}.$
We have thus arrived at the following result.

 \begin{proposition}\label{thm:alphaleq1case1} 
 Assume $b_+>a.$ For $\alpha <\frac{1}{3}$, as $N\to\infty$,
\[ P_N(a) \sim {\rm e}^{-N^\alpha\,I_X(a)}  \frac{ C_X(a)}{{N}^{\alpha/2}}.\]
For $\alpha \in[\frac{1}{3},\frac{1}{2})$,  
\[\liminf_{N\to\infty}P_N(a) \,{\rm e}^{N^\alpha\,I_X(a)}  {N}^{\alpha/2}\geqslant C_X(a).\]
\end{proposition}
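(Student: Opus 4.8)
The plan is to mirror the structure of the fast-regime argument in Section \ref{sec:AboutP:fast}, replacing the Poisson tail estimate around $\nu$ by one around $a$ and the Chernoff bound on $\barxa$ by the exact Bahadur-Rao asymptotics. Throughout we exploit that in Case I the dominant contribution to the rare event is $\barxa$ being of order $a$, so the Poisson random variable $\mathrm{Pois}(N\barxa)$ need only attain a \emph{typical} value; this is reflected in the split below, where the Poisson factor tends to $1$ rather than contributing exponential decay.

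\textbf{Upper bound.} First I would decompose, as in~(\ref{decomp})--(\ref{split}), for a parameter $\delta<0$ to be chosen:
\[
P_N(a)\;=\;\int_0^\infty \psi_N(a\,|\,x)\,{\mathbb P}\!\left(\barxa\in{\rm d}x\right)
\;\leqslant\;
{\mathbb P}\!\left(\barxa\geqslant a-N^\delta\right)\;+\;\psi_N(a\,|\,a-N^\delta),
\]
using that $\psi_N(a\,|\,x)\leqslant 1$ on $\{x\geqslant a-N^\delta\}$ and $\psi_N(a\,|\,x)\leqslant\psi_N(a\,|\,a-N^\delta)$ on $\{x<a-N^\delta\}$. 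The first term is handled by~(\ref{BR1}): $\mathbb P(\barxa\geqslant a-N^\delta)\sim N^{-\alpha/2}C_X(a-N^\delta)\,{\rm e}^{-N^\alpha I_X(a-N^\delta)}$, and expanding $I_X(a-N^\delta)=I_X(a)-N^\delta I_X'(a)+O(N^{2\delta})$ shows the correction $N^\alpha N^\delta I_X'(a)\to 0$ precisely when $\delta<-\alpha$; continuity of $C_X$ takes care of the prefactor. For the second term I would apply the Chernoff (equivalently Markov) bound for the Poisson tail, $\psi_N(a\,|\,a-N^\delta)\leqslant\exp(-N\,I(a\,|\,a-N^\delta))$, note $I(a\,|\,a-N^\delta)=a\log\frac{a}{a-N^\delta}+N^\delta-\text{(lower order)}$ is of order $N^{2\delta}$ at leading order after cancellation, so that $N\,I(a\,|\,a-N^\delta)\asymp N^{1+2\delta}$; this dominates $N^\alpha I_X(a)$, making the term negligible against ${\rm e}^{-N^\alpha I_X(a)}$, exactly when $1+2\delta>\alpha$, i.e. $\delta>(\alpha-1)/2$. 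Both constraints $\delta<-\alpha$ and $\delta>(\alpha-1)/2$ are simultaneously satisfiable iff $(\alpha-1)/2<-\alpha$, i.e. $\alpha<\tfrac13$.

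\textbf{Lower bound.} Bounding below by restricting the integral to $\{\barxa\geqslant a+N^\delta\}$ and using monotonicity of $\psi_N(a\,|\,\cdot)$,
\[
P_N(a)\;\geqslant\;\psi_N(a\,|\,a+N^\delta)\,{\mathbb P}\!\left(\barxa\geqslant a+N^\delta\right).
\]
Here the Poisson parameter exceeds the target $a$, so $\psi_N(a\,|\,a+N^\delta)=1-\mathbb P(\mathrm{Pois}(N(a+N^\delta))<Na)$, and a Chernoff bound on this lower-tail gives a bound $1-\exp(-c\,N^{1+2\delta})\to 1$ whenever $1+2\delta>0$, i.e. $\delta>-\tfrac12$. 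The second factor is again asymptotically $N^{-\alpha/2}C_X(a)\,{\rm e}^{-N^\alpha I_X(a)}$ by~(\ref{BR1}) as soon as $\delta<-\alpha$. Thus for any $\alpha<\tfrac12$ we may pick $\delta\in(-\tfrac12\wedge(\alpha-1)/2,\,-\alpha)$ — note $-\tfrac12<-\alpha$ precisely when $\alpha<\tfrac12$ — yielding the stated $\liminf$ bound on $[\tfrac13,\tfrac12)$; combined with the matching upper bound on $\alpha<\tfrac13$ this gives $P_N(a)\sim N^{-\alpha/2}C_X(a)\,{\rm e}^{-N^\alpha I_X(a)}$ there.

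\textbf{Main obstacle.} The delicate point is the upper bound: the cruder Chernoff bound (rather than exact asymptotics) on $\psi_N(a\,|\,a-N^\delta)$ forces $\delta>(\alpha-1)/2$, which together with $\delta<-\alpha$ is exactly what costs us the range $[\tfrac13,\tfrac12)$ — the interval of admissible $\delta$ shrinks to empty at $\alpha=\tfrac13$. One must also verify that the Bahadur–Rao expansion~(\ref{BR1}) is uniform enough to be applied at the moving point $a\mp N^\delta$ (this is where $\delta<0$, so $a\mp N^\delta\to a$, and the non-lattice hypothesis and continuity of $I_X,C_X$ near $a<b_+$ are used); and that in Case I the hypothesis $b_+>a$ guarantees $I_X$ is finite and smooth in a neighbourhood of $a$, so that $I_X'(a)$ and $I_X''(a)$ exist and the Taylor expansions are legitimate. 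No genuinely new idea beyond the fast-regime template is needed; only the bookkeeping of the two $\delta$-constraints changes.
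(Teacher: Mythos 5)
Your proposal is correct and follows essentially the same route as the paper: the identical split $P_N(a)\leqslant {\mathbb P}(\barxa\geqslant a-N^\delta)+\psi_N(a\,|\,a-N^\delta)$ with Bahadur--Rao plus a Taylor expansion of $I_X$ for the first term (requiring $\delta<-\alpha$) and a Chernoff bound of order $N^{1+2\delta}$ for the second (requiring $\delta>(\alpha-1)/2$), giving $\alpha<\tfrac13$; and the same lower bound $\psi_N(a\,|\,a+N^\delta)\,{\mathbb P}(\barxa\geqslant a+N^\delta)$ with $\delta\in(-\tfrac12,-\alpha)$, valid for $\alpha<\tfrac12$. Your remarks on where the range $[\tfrac13,\tfrac12)$ is lost and on the cancellation making $I(a\,|\,a-N^\delta)$ of order $N^{2\delta}$ match the paper's reasoning.
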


\begin{rem}
Note that here, in contrast with Prop.~\ref{thm:alphageq1}, the rate function is that of $X$ rather than the Poisson random variable. As expected, when $\alpha$ is small, the rare event is typically a result of a large deviation of $\overline X_{N^\alpha}$. However, for values of $\alpha$ closer to $1$ the same reasoning as in Remark \ref{REM0} applies, and we do not expect a simple asymptotic expression as given in Prop.\
\ref{thm:alphaleq1case1} to hold for all $\alpha\in(\frac 1 3,1)$ (as will be confirmed in Section~\ref{sec:exampleP},
which covers the special case in which the $X_i$ are exponentially distributed). \hfill$\diamondsuit$
\end{rem}

\begin{rem}
As in Remark \ref{REM1}, the asymptotics for $p_N(a)$ can be found as well. As it turns out, as $N\to\infty$,
\[p_N(a) \sim {\rm e}^{-N^\alpha\,I_X(a)}  \frac{C_X(a)I_X'(a)}{{N}^{1-\alpha/2}}.\]
This is consistent with the result stated in Prop.\ \ref{thm:alphaleq1case1}: 
\begin{eqnarray*}p_N(a) &=& P_N(a)-P_N(a+1/N)\\
&\approx& \frac{C_X(a)}{{N}^{\alpha/2}}{\rm e}^{-N^\alpha\,I_X(a)} - \frac{C_X(a+1/N)}{{N}^{\alpha/2}}{\rm e}^{-N^\alpha\,I_X(a+1/N)}\\&\approx& \frac{C_X(a)}{{N}^{\alpha/2}}{\rm e}^{-N^\alpha\,I_X(a)}\left(1-{\rm e}^{-N^{\alpha-1}I_X'(a)}\right)
\approx C_X(a) I_X'(a) {\rm e}^{-N^\alpha\,I_X(a)} N^{\alpha/2-1} ,
\end{eqnarray*}
for large $N$. Note that the asymptotic expansion of $P_N(a)$ has a polynomial factor $N^{-\alpha/2}$, whereas
$p_N(a)$ has a polynomial factor $N^{\alpha/2-1}$. So in this case $P_N(a)$ and $p_N(a)$ are not (asymptotically) off by a constant, but by a constant multiplied by $N^{\alpha-1}$. \hfill$\diamondsuit$
\end{rem}

{\it Case II}. 
In the above arguments for the slow regime, it is crucial that we assumed that $X_i$ can exceed $a$ with positive probability (which entails that $I_X(a)<\infty$). We now consider the situation that $b_+<a.$
We derive the exact asymptotics of $P_N(a)$ by separately considering a lower bound and an upper bound. 
We throughout assume that both $I_X(b_+)$ and $I_X'(b_+)$ are finite. The proof essentially follows   that of \cite{blom2016}, in which exact asymptotics of the Markov-modulated infinite-server queue are addressed.

We start with the lower bound.  Let $K$ the smallest value in $\{2,3,\ldots\}$ such that 
$-1/K$ is strictly larger than $\alpha-1$. Fix $\delta\in(\alpha-1,-1/K)$. 
We have, for $\alpha<1$,
\[P_N(a) \geqslant \int_{  b_+-N^{\delta}}^{b_+}
\psi_N(a\,|\,x)\,{\mathbb P}\left(\barxa\in {\rm d}x\right)=
\int_{  b_+-N^{\delta}}^{b_+}
\psi_N(a\,|\,x)\,N^\alpha \xi_{N^\alpha}(N^\alpha x){\rm d}x ,\]
recalling that $\xi_N(\cdot)$ denotes the density of $\sum_{i=1}^N X_i$.
Fix an arbitrary $\zeta>0$. The right-hand side of the previous display majorizes, by Petrov's local limit version of the Bahadur-Rao result (\ref{BR1}), in combination with (\ref{BR2}), for $N$ sufficiently large,
\[(1-\zeta)\int_{b_+-N^{ \delta}}^{b_+}\frac{C(a\,|\,x)}{\sqrt{N}} {\rm e}^{-N\,I(a\,|\,x)}\cdot{ C_X(x)I_X'(x)}{N^{\alpha/2}} {\rm e}^{-N^\alpha I_X(x)}{\rm d}x.\]
This is in turn asymptotically equal to, with $\gamma(a) :=   C(a\,|\,b_+)\,C_X(b_+)I_X'( b_+)$, using the transformation $y:=b_+-x$,
\begin{equation}
\label{INT} (1-\zeta)\,\gamma(a) \,N^{(\alpha-1)/2}{\rm e}^{-N\,I(a\,|\,b_+)} {\rm e}^{-N^\alpha I_X(b_+)}
\int_{0}^{N^\delta} {\rm e}^{N\phi_1(y)} \,{\rm e}^{N^\alpha\phi_2(y)} 
{\rm d}y,\end{equation}
where
\[
 \phi_1(y):=-I(a\,|\,b_+-y)+I(a\,|\,b_+)=a\log\left(1-\frac{y}{b_+}\right)+y,\:\:\: \:\:\:\phi_2(y):=-I_X(b_+-y)+I_X(b_+).\]
 For all $y\in[0,N^\delta]$, there exist $\ell_i$ and $u_i$ ($i=1,2$) such that
 \[\ell_1 N^{1+K\delta} +\sum_{k=1}^{K-1}\beta_{1,k}  N\,y^k \leqslant N\phi_1(y) \leqslant 
u_1 N^{1+K\delta} +\sum_{k=1}^{K-1}\beta_{1,k}  N\,y^k,\:\:\:\:\beta_{1,1}:=1-\frac{a}{b_+},\]
  \[\ell_2 N^{\alpha+K\delta} +\sum_{k=1}^{K-1}\beta_{2,k}  N^\alpha y^k \leqslant N^\alpha\phi_2(y) \leqslant 
u_2 N^{\alpha+K\delta} +\sum_{k=1}^{K-1}\beta_{2,k}  N^\alpha y^k;\]
observe that $\beta_{1,1}<0$. 
We now further analyze the integral in (\ref{INT}). 
We find, using the above inequalities and the fact that both $1+K\delta<0$ and $\alpha+K\delta<0$ (as we have chosen $\delta< -1/K$),
\begin{align*}
\int_{0}^{N^\delta} {\rm e}^{N\phi_1(y)} \,{\rm e}^{N^\alpha\phi_2(y)} 
{\rm d}y &\geqslant  {\rm e}^{\ell_1 N^{1+K\delta}+\ell_2 N^{\alpha+K\delta}} \int_{0}^{N^\delta}
\exp\left(\sum_{k=1}^{K-1}\beta_{1,k}  N\,y^k+\sum_{k=1}^{K-1}\beta_{2,k}  N^\alpha y^k\right)
{\rm d}y\\&\sim \int_{0}^{N^\delta}
\exp\left(\sum_{k=1}^{K-1}\beta_{1,k}  N\,y^k+\sum_{k=1}^{K-1}\beta_{2,k}  N^\alpha y^k\right)
{\rm d}y.
\end{align*}
Applying the transformation $z:=N\,y$, and using that $\delta>-1$, this integral can be evaluated as
\[\frac{1}{N} 
\int_{0}^{N^{\delta+1}}
\exp\left(\sum_{k=1}^{K-1}\beta_{1,k}  N^{1-k}\,z^k+\sum_{k=1}^{K-1}\beta_{2,k}  N^{\alpha-k} z^k\right)
{\rm d}z\sim\frac{1}{N}\int_0^\infty 
{\rm e}^{\beta_{1,1}z}{\rm d}z
=
\frac{1}{N}\cdot\frac{b_+}{{a-b_+}}.
\]
Letting $\zeta\downarrow 0$, we have thus found
\[\liminf_{N\to\infty} P_N(a) N^{(\alpha+1)/2} {\rm e}^{N\,I(a\,|\,b_+)} {\rm e}^{N^\alpha I_X(b_+)} \geqslant
\gamma(a) \cdot\frac{b_+}{{a-b_+}}.\]

We proceed by the upper bound. Evidently,
\[P_N(a) = \int_{  b_+-N^{\delta}}^{b_+}
\psi_N(a\,|\,x)\,{\mathbb P}\left(\barxa\in {\rm d}x\right)
+\int_0^{  b_+-N^{\delta}}
\psi_N(a\,|\,x)\,{\mathbb P}\left(\barxa\in {\rm d}x\right).\]
The first integral in the previous display can be dealt with as in the upper bound ({\it mutatis mutandis}; e.g.\
the factor $1-\zeta$ becomes $1+\zeta$, and the $u_i$ need to be used rather than the $\ell_i$). We therefore focus on the second integral, which is clearly bounded above by 
$\psi_N(a\,|\,b_+-N^\delta)$. 
Now observe that
\[I(a\,|\,b_+)-I(a\,|\,b_+-N^\delta)=a\log\left(\frac{b_+-N^\delta}{b_+}\right)+N^\delta\leqslant
\left(1-\frac{a}{b_+}\right)N^\delta = \beta_{1,1}\,N^\delta.\]
As a consequence, as $N\to\infty$, recalling that $\delta>\alpha-1$ and $\beta_{1,1}<0$,
\[{\rm e}^{N\,I(a\,|\,b_+)} {\rm e}^{N^\alpha I_X(b_+)} \psi_N(a\,|\,b_+-N^\delta) \leqslant {\rm e}^{\beta_{1,1}N^{\delta+1}} {\rm e}^{N^\alpha I_X(b_+)}\to 0.\]
We conclude the interval $[0,b_+-N^\delta)$ does not contribute to the asymptotics. We thus have established the upper bound, leading to the following result.

\begin{proposition} 
Assume $\alpha<1$ and $b_+<a$. Then
\[\lim_{N\to\infty} P_N(a) \sim  {\rm e}^{-N\,I(a\,|\,b_+)} {\rm e}^{-N^\alpha I_X(b_+)}N^{-(\alpha+1)/2}
\gamma(a) \,\frac{b_+}{{a-b_+}},\]
where $\gamma(a) :=   C(a\,|\,b_+)\,C_X(b_+)I_X'( b_+)$.
\end{proposition}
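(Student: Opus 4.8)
The plan is to follow the two-sided bounding strategy set out in the discussion preceding the statement, which itself adapts the argument of \cite{blom2016}. Write $P_N(a)=\int_0^\infty \psi_N(a\,|\,x)\,{\mathbb P}(\barxa\in{\rm d}x)$ and replace ${\mathbb P}(\barxa\in{\rm d}x)$ by the density $N^\alpha\xi_{N^\alpha}(N^\alpha x)\,{\rm d}x$. The first move is to localise around the right endpoint $b_+$ of the support of $\barxa$: split the integral at $b_+-N^\delta$, where $K\in\{2,3,\dots\}$ is the smallest integer with $-1/K>\alpha-1$ and $\delta$ is fixed in $(\alpha-1,-1/K)$. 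This choice drives everything: $\delta>-1$ permits the rescaling $z=Ny$ used below, $1+K\delta<0$ and $\alpha+K\delta<0$ make the order-$K$ Taylor remainders $o(1)$ after multiplication by $N$ (resp.\ $N^\alpha$), and $\delta>\alpha-1$ makes the contribution of $[0,b_+-N^\delta)$ vanish.

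For the lower bound I would keep only the piece over $[b_+-N^\delta,b_+]$, insert Petrov's local-limit version (\ref{BR1a}) of Bahadur-Rao for $\xi_{N^\alpha}$ and the Poisson Bahadur-Rao estimate (\ref{BR2}) for $\psi_N$, and pull the slowly varying prefactors, evaluated at $b_+$, out of the integral, so that up to an arbitrary $1\mp\zeta$ one is left with $\gamma(a)\,N^{(\alpha-1)/2}\,{\rm e}^{-NI(a\,|\,b_+)}{\rm e}^{-N^\alpha I_X(b_+)}\int_0^{N^\delta}{\rm e}^{N\phi_1(y)+N^\alpha\phi_2(y)}\,{\rm d}y$ after the shift $y=b_+-x$; here $\phi_1(y)=a\log(1-y/b_+)+y$ and $\phi_2(y)=I_X(b_+)-I_X(b_+-y)$ both vanish at $0$, with $\phi_1'(0)=1-a/b_+<0$. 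Taylor-expanding $\phi_1,\phi_2$ to order $K$ on $[0,N^\delta]$ gives two-sided bounds consisting of a polynomial of degree $K-1$ in $y$ plus a remainder of size $N^{1+K\delta}$ (resp.\ $N^{\alpha+K\delta}$), negligible by the choice of $\delta$; substituting $z=Ny$ then turns each monomial $y^k$ with $k\ge2$ into a coefficient of order $N^{1-k}$ or $N^{\alpha-k}$, all vanishing, so by dominated convergence the integral is asymptotic to $N^{-1}\int_0^\infty{\rm e}^{(1-a/b_+)z}\,{\rm d}z=N^{-1}b_+/(a-b_+)$, the upper limit $N^{\delta+1}$ diverging to infinity. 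Collecting this $N^{-1}$ with the local-limit prefactor and sending $\zeta\downarrow0$ yields the asymptotic lower bound with constant $\gamma(a)\,b_+/(a-b_+)$.

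The upper bound is the same Laplace-type computation run over $[b_+-N^\delta,b_+]$ with the roles of $1\mp\zeta$ reversed and the upper Taylor coefficients in place of the lower ones, together with a disposal of the remaining piece $\int_0^{b_+-N^\delta}\psi_N(a\,|\,x)\,{\mathbb P}(\barxa\in{\rm d}x)$. For the latter I would bound it crudely by $\psi_N(a\,|\,b_+-N^\delta)$ and use $I(a\,|\,b_+)-I(a\,|\,b_+-N^\delta)=a\log\!\big((b_+-N^\delta)/b_+\big)+N^\delta\le(1-a/b_+)N^\delta<0$, so that multiplying by the normalising factors leaves at most ${\rm e}^{(1-a/b_+)N^{\delta+1}}{\rm e}^{N^\alpha I_X(b_+)}\to0$, since $\delta+1>\alpha$ and $1-a/b_+<0$. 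Combining the matching upper and lower bounds gives the asserted equivalence.

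The genuinely delicate part is the bookkeeping in the Laplace-type integral: one must pick the Taylor order $K$ and the window exponent $\delta$ so that, simultaneously, (i) the order-$K$ remainders stay negligible once all the $N$-prefactors are accounted for, and (ii) the window $[0,N^\delta]$ is still wide enough, i.e.\ $N^{\delta+1}\to\infty$, for the rescaled integral to saturate $\int_0^\infty{\rm e}^{(1-a/b_+)z}\,{\rm d}z$ -- these two competing demands are exactly what force $\delta\in(\alpha-1,-1/K)$ and the somewhat unusual choice of $K$. A secondary point needing care is the continuity at $b_+$ of the prefactors $C(a\,|\,\cdot)$, $C_X(\cdot)$, $I_X'(\cdot)$ and of the higher Taylor coefficients, which is where the standing assumptions $I_X(b_+)<\infty$ and $I_X'(b_+)<\infty$ enter.
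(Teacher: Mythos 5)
Your argument is essentially the paper's own proof: the same localisation window $[b_+-N^\delta,b_+]$ with $K$ the smallest integer satisfying $-1/K>\alpha-1$ and $\delta\in(\alpha-1,-1/K)$, the same combination of Petrov's local limit theorem with the Poisson Bahadur--Rao estimate, the same order-$K$ Taylor sandwich followed by the substitution $z=Ny$ producing $\int_0^\infty {\rm e}^{(1-a/b_+)z}\,{\rm d}z=b_+/(a-b_+)$, and the same Chernoff-type disposal of the interval $[0,b_+-N^\delta)$ in the upper bound. It is correct to the same extent as the paper's argument, with no new gaps introduced.
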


\begin{rem}\label{REM3}
As before, we can identify the asymptotics of $p_N(a)$ as well:
\[p_N(a) \sim  {\rm e}^{-N\,I(a\,|\,b_+)} {\rm e}^{-N^\alpha I_X(b_+)}N^{-(\alpha+1)/2}\cdot
\gamma(a) \cdot\frac{b_+}{{a-b_+}}\left(1-{\rm e}^{-I'(a\,\mid\,b_+)}\right)\]
as $N\to\infty$.\hfill$\diamondsuit$
\end{rem}

\subsection{Intermediate range}\label{sec:BR} 

We finally consider the case $\alpha=1$. 
The random variable ${\rm Pois} (N \overline{X}_{N^\alpha})$ is distributed as the sum of $N$ i.i.d.\ contributions, each of them distributed as $Z:={\rm Pois}(X)$. Assuming that maximum in the definition (\ref{IZ})  of $I_Z(a)$  is attained at $\vartheta_Z^\star$, the Bahadur-Rao result yields, as $N\to \infty$,
\begin{align*}
P_N(a)\sim {\rm e}^{-N I_Z(a)}\frac{ C_Z(a)}{ \sqrt{N}},
\end{align*}
where now
\begin{eqnarray*}
C_Z(a)&:=& \frac{1}{1-{\rm e}^{\vartheta_Z^\star}} \frac{1}{\sqrt{2\pi \Lambda_Z''(\vartheta_Z^\star)}}
= \frac{1}{1-{\rm e}^{\vartheta_Z^\star}} \frac{1}{\sqrt{2\pi\left({\rm e}^{\vartheta_Z^\star} \Lambda_X'\left({\rm e}^{\vartheta_Z^\star}-1\right)+{\rm e}^{2\vartheta_Z^\star} \Lambda_X''\left({\rm e}^{\vartheta_Z^\star}-1\right)\right)}}
\\&=&\frac{1}{1-{\rm e}^{\vartheta_Z^\star}} \frac{1}{\sqrt{2\pi\left(a+{\rm e}^{2\vartheta_Z^\star} \Lambda_X''\left({\rm e}^{\vartheta_Z^\star}-1\right)\right)}}.
\end{eqnarray*} 
Based on the same arguments as in Remark \ref{REM1} we infer that
\begin{align*}
p_N(a)\sim \frac{1}{\sqrt{2\pi N\left(a+{\rm e}^{2\vartheta_Z^\star} \Lambda_X''\left({\rm e}^{\vartheta_Z^\star}-1\right)\right)}} \,{\rm e}^{-N I_Z(a)}\,.
\end{align*}

\section{Asymptotics of $P_N(a)$: special case of gamma $X_i$'s}\label{sec:exampleP}

In this section we consider the special case that the $X_i$\,s are i.i.d.\ samples from the gamma distribution. The use of this specific mixed Poisson distribution for call center staffing purposes is advocated in e.g.\ \cite{jongbloed2001managing}.
In the analysis, this can be reduced to the case where the $X_i$\,s are exponentially distributed with parameter $\lambda$ (i.e., mean $\lambda^{-1}$), see Remark \ref{JK}. 

\vb

To start the exposition, we note that if the $X_i$\,s are exponential with parameter $\lambda$, then  $\sum_{i=1}^{N^{\alpha}} X_i$ has a gamma distribution with parameters
$N^{\alpha}$ and $\lambda$.
The objective of this section is to evaluate the asymptotics of $p_N(a)$ across all values of $\alpha$; later we comment on what the corresponding $P_N(a)$ looks like. 
We assume throughout that $a$ is larger than ${\lambda}^{-1}$. 
The computations are facilitated by the fact that an exact expression for $p_N(a)$ is available. 
It takes a routine calculation, which we include for completeness, to compute $p_N(a)$:
\begin{eqnarray*}
p_N(a)&=&\int_0^\infty \frac{(N^{1-\alpha} x)^{Na}}{(Na)!} \mathrm{e}^{- (\lambda + N^{1-\alpha})x} \frac{\lambda^{N^{\alpha}}}{(N^{\alpha}-1)!} x^{N^{\alpha}-1}\, \mathrm{d}x \\
&=&\frac{(N^{1-\alpha})^{Na}}{(Na)!} \frac{\lambda^{N^{\alpha}}}{(N^{\alpha}-1)!} \int_0^\infty \mathrm{e}^{- (\lambda+ N^{1-\alpha})x} x^{Na + N^{\alpha}-1}\, \mathrm{d}x \\
&=&\frac{(Na + N^{\alpha}-1)!}{(Na)!(N^{\alpha}-1)!}\frac{(N^{1-\alpha})^{Na}\lambda^{N^{\alpha}}}{(\lambda + N^{1-\alpha})^{Na+N^{\alpha}}}\int_0^\infty \frac{(\lambda + N^{1-\alpha})^{N^{\alpha}}}{(Na + N^{\alpha}-1)!} \mathrm{e}^{- (\lambda + N^{1-\alpha})x} x^{Na + N^{\alpha}-1}\, \mathrm{d}x \\
&= &{Na + N^{\alpha}-1 \choose Na} \bigg(\frac{N^{1-\alpha}}{\lambda + N^{1-\alpha}}\bigg)^{Na}\bigg(\frac{\lambda}{\lambda + N^{1-\alpha}}\bigg)^{N^{\alpha}}.
\end{eqnarray*}

\begin{rem}
We recognize here the probability that a \textit{negative binomially distributed} random variable with success probability $p:={N^{1-\alpha}}/({\lambda + N^{1-\alpha}})$ attains $Na$ successes before $N^{\alpha}$ failures have occurred. This can be understood as follows. Note that a Poisson random variable with parameter $x T$ represents the number of Exp$(x)$ ``success clocks'' expiring within a period of length $T$. In our case the rate of the success clocks is $x=N^{1-\alpha}$ and  the length of the period corresponds to the time it takes for $N$ exponential ``failure clocks'' of rate $\lambda$ to expire, that is, we have $T=\sum_{i=1}^{N^{\alpha}} X_i$.
Thus, $p_N(a)$ is the probability that $Na$ success clocks expire before the $N^\alpha$th failure clock expires and the period ends. The success probability is indeed given by $p$ as it is the probability that the next Exp$(N^{1-\alpha})$ success clock expires before a Exp$(\lambda)$ failure clock.
\hfill$\diamondsuit$
\end{rem}

\begin{rem}\label{JK}
In the above setup we considered exponentially distributed $X_i$\,s. Note, however, that our analysis only relies on $\sum_{i=1}^{N^{\alpha}} X_i$ having a gamma distribution, and thus can easily be extended to the practically relevant case \cite{jongbloed2001managing} that the $X_i$\,s are i.i.d.\ samples from a gamma distribution. It is noted that the gamma distribution has two parameters (as opposed to the exponential distribution), and therefore allows for more modelling flexibility (e.g., the mean and variance can be fitted).
\hfill$\diamondsuit$\end{rem}


As a first step in deriving the exact asymptotics of $p_N(a)$, we approximate the binomial coefficients by applying Stirling's formula, which says that
$
n! \sim \sqrt{2 \pi n}\, n^n {\rm e}^{-n} .
$
As a consequence we find that
\begin{align*}
{ Na + N^{\alpha} - 1 \choose Na } \sim 
\frac{1}{\sqrt{2 \pi}}\frac{\sqrt{Na + N^{\alpha}-1}}{\sqrt{Na}\sqrt{N^{\alpha}-1}}
\frac{(Na + N^{\alpha}-1)^{Na + N^{\alpha}-1}}{(Na)^{Na} (N^{\alpha}-1)^{N^{\alpha}-1}}
\end{align*}
Applying this in the expression for $p_N(a)$ then yields
\begin{align}\nonumber
p_N(a)&= {Na + N^{\alpha}-1 \choose Na}\bigg(\frac{N^{1-\alpha}}{\lambda + N^{1-\alpha}}\bigg)^{Na} \bigg(\frac{\lambda}{\lambda + N^{1-\alpha}}\bigg)^{N^{\alpha}}\\
&\sim \frac{1}{\sqrt{2 \pi}}\frac{\sqrt{Na + N^{\alpha}-1}}{\sqrt{Na}\sqrt{N^{\alpha}-1}}
\frac{(Na + N^{\alpha}-1)^{Na + N^{\alpha}-1}}{(Na)^{Na} (N^{\alpha}-1)^{N^{\alpha}-1}}
\bigg(\frac{N^{1-\alpha}}{\lambda + N^{1-\alpha}}\bigg)^{Na}\bigg(\frac{\lambda}{\lambda + N^{1-\alpha}}\bigg)^{N^{\alpha}}\nonumber\\
&=\frac{1}{\sqrt{2 \pi}} 
{\color{black}\frac{\sqrt{N^{\alpha}-1}}{\sqrt{Na}\sqrt{Na + N^{\alpha} -1}}        }               \cdot 
{\color{black}\bigg(\frac{Na + N^{\alpha}-1}{a\lambda (N^{\alpha} + \frac{N}{\lambda})}\bigg)^{Na}      }             \cdot 
{\color{black}\bigg(\frac{\lambda(Na + N^{\alpha}-1)}{(\lambda + N^{1-\alpha})(N^{\alpha}-1)}\bigg)^{N^{\alpha}}}\label{three}.
\end{align}

In order to determine the asymptotic behavior of this expression for large $N$, we again consider the three  regimes separately. We do so by evaluating the three factors in (\ref{three}). 

\subsection{Fast regime}\label{sec:exampleP:fast}

We start by examining the case $\alpha>1$. For the first factor we have
\begin{align*}
\frac{1}{\sqrt{2 \pi}}\frac{\sqrt{N^{\alpha}-1}}{\sqrt{Na}\sqrt{Na + N^{\alpha} -1}} &\sim 
\frac{1}{\sqrt{2 \pi}}\,\frac{1}{\sqrt{Na}}\,.
\end{align*}
The middle factor can be addressed as follows. 
For ease we analyze its logarithm:
\begin{eqnarray}\nonumber
Na
\log\left(\frac{Na + N^{\alpha}-1}{a\lambda (N^{\alpha} + {N}/{\lambda})}\right)
&=&-Na\log(a\lambda)\,+\\
&&Na\log\left(1+N^{1-\alpha} a -N^{-\alpha}\right) -Na\log\left(1+ N^{1-\alpha}/\lambda\right)
\label{expon}\end{eqnarray}
For the last factor we similarly obtain
\begin{eqnarray}\nonumber
N^\alpha\log \left(\frac{\lambda(Na + N^{\alpha}-1)}{(\lambda + N^{1-\alpha})(N^{\alpha}-1)}\right)
&=&N^\alpha\log (1+N^{1-\alpha}a-N^{-\alpha})-\\
&&N^\alpha\log\left(1+\frac{1}{\lambda} N^{1-\alpha}-\frac{1}{\lambda}N^{1-2\alpha}-N^{-\alpha}\right)
\label{expon2}
\end{eqnarray}

Define $\overline k:= (\alpha-1)^{-1}$ and $k_+:= {\lfloor \overline k\rfloor}$.
The logarithms can be expanded relying on their standard Taylor series form, but it can be argued that the resulting infinite series can be truncated. 
For instance,
\[Na\log\left(1+N^{1-\alpha} a -N^{-\alpha}\right) =Na \sum_{k=1}^\infty \frac{(-1)^{k+1}}{k}(N^{(1-\alpha)}a - N^{-\alpha})^k
\sim Na \sum_{k=1}^{k_+} \frac{(-1)^{k+1}a^k}{k}N^{(1-\alpha)k}.\]
Likewise, 
\[Na\log\left(1+N^{1-\alpha}/\lambda\right)\sim 
Na \sum_{k=1}^{k_+} \frac{(-1)^{k+1}\lambda^{-k}}{k}N^{(1-\alpha)k}.\]

We thus find that (\ref{expon}) asymptotically equals 
\[-Na\log(a\lambda) + Na  \sum_{k=1}^{k_+}\frac{(-1)^{k}(\lambda^{-k}-a^k)}{k}N^{(1-\alpha)k}.\]

For the last factor, note that from $k_++1$ on all terms vanish, leaving us with 
\begin{eqnarray*}N^\alpha\log(1+N^{1-\alpha}a-N^{-\alpha}) &\sim&  N^\alpha\sum_{k=1}^{k_++1} \frac{(-1)^{k+1}a^k}{k}N^{(1-\alpha)k}-1,\\
N^\alpha\log(1+N^{1-\alpha}/\lambda-N^{1-2\alpha}/\lambda-
N^{-\alpha}) &\sim& N^\alpha\sum_{k=1}^{k_++1} \frac{(-1)^{k+1}\lambda^{-k}}{k}N^{(1-\alpha)k}-1.\end{eqnarray*}
After a bit of rewriting, we conclude that (\ref{expon2}) equals
\[N\sum_{k=0}^{k_+} \frac{(-1)^k(a^{k+1}-\lambda^{-(k+1)})}{k+1}N^{(1-\alpha)k} .\]
Defining
\[
\xi_0:= -a\log(\lambda a)+a-\frac{1}{\lambda},
\:\:\:\:
\xi_k:=(-1)^k\left(\lambda^{-k}\left(\frac{a}{k}-\frac{1/\lambda}{k+1}\right)-a^{k+1}\left(\frac{1}{k}-\frac{1}{k+1}\right)\right),
\]
we conclude that in case $\alpha>1$,
\[p_N(a)\sim \frac{1}{\sqrt{2\pi a N}} \,{\rm e}^{\xi_0 N} \exp\left(\sum_{k=1}^{k_+}\xi_k N^{(1-\alpha)k+1}\right).\]
In particular, if $\alpha>2$, then the last factor equals $1$ (the empty sum being defined as $0$). 
It is not hard to check that this result agrees with what has been found for $\alpha>3$ in Section  \ref{sec:AboutP}.

\subsection{Slow regime}\label{sec:exampleP:slow}

If $\alpha<1$, then the first factor behaves as
\begin{align*}
\frac{1}{\sqrt{2 \pi}}\frac{\sqrt{N^{\alpha}-1}}{\sqrt{Na}\sqrt{Na + N^{\alpha} -1}}\sim\frac{1}{\sqrt{2 \pi}}\,\frac{1}{a} N^{\alpha/2-1}\,.
\end{align*}
For the logarithm of the middle factor we now have
\begin{equation}
\label{expon3}
Na\log\left(\frac{Na + N^{\alpha}-1}{a\lambda (N^{\alpha} + {N}/{\lambda})}\right)\\
=Na\log\left(1+\frac 1 a (N^{\alpha-1} - N^{-1})\right) - Na\log\left(1+ \lambda N^{\alpha-1}\right).
\end{equation}
With $\widetilde{k}:=\alpha(1-\alpha)^{-1}$ and $k_-:=\lfloor \widetilde k\rfloor $, we obtain that this factor asymptotically equals 
\[
Na\sum_{k=1}^{k_-+ 1} \frac{(-1)^{k+1}(a^{-k}-{\lambda}^{k})}{k}N^{(\alpha-1)k}-1
= Na\sum_{k=0}^{k_-} \frac{(-1)^{k}(a^{-(k+1)}-{\lambda}^{k+1})}{k+1}N^{(\alpha-1)(k+1)}-1\,.
\]
For the last factor we find
\begin{eqnarray*}
N^\alpha\log \left(\frac{\lambda(Na + N^{\alpha}-1)}{(\lambda + N^{1-\alpha})(N^{\alpha}-1)}\right)
&=&N^\alpha\log \left(\lambda a \right)
+ N^\alpha\log \left(1 + \frac1a N^{\alpha-1}- \frac1a N^{-1})\right)\nonumber 
\\
&&
- \,N^\alpha \log \left(1+ \lambda N^{\alpha-1}-\lambda N^{-1} - N^{-\alpha}\right) \nonumber
\end{eqnarray*}
where
\begin{eqnarray*}
N^{\alpha}\log\left(1+\frac 1 a N^{\alpha - 1} - \frac 1 a N^{-1}\right) 
&\sim& N^{\alpha} \sum_{k=1}^{k_-} \frac{(-1)^{k+1}}{k}a^{-k}N^{(\alpha-1)k}\,,\\
N^\alpha \log \left(1+ \lambda N^{\alpha-1}-\lambda N^{-1} - N^{-\alpha}\right) &\sim& N^{\alpha} \sum_{k=1}^{k_-} \frac{(-1)^{k+1}}{k} \lambda^k N^{(\alpha-1)k}-1\,.
\end{eqnarray*}

Combining the above we conclude
\begin{align}\label{pNalphaleq1}
p_N(a)\sim\frac{1}{\sqrt{2\pi}a} N^{\frac{\alpha}{2}-1} \,{\rm e}^{\zeta_0 N^\alpha} \exp\left(\sum_{k=1}^{k_-} \zeta_k N^{(\alpha-1)k+\alpha} \right)\,,
\end{align}
where
\begin{align*}
\zeta_0:=\log(\lambda a)+1-\lambda a\,,\quad \zeta_k:=
 (-1)^{k}\left(\lambda^{k}\left(\frac{1}{k}-\frac{a\lambda}{k+1}\right)-a^{-k}\left(\frac{1}{k}-\frac{1}{k+1}\right)\right).
\end{align*}
It can again be verified that this result coincides for $\alpha<\frac{1}{3}$ with the one derived in Section \ref{sec:AboutP}.

\subsection{Intermediate regime}\label{sec:exampleP:intermediate}

For completeness, we also include the result for the case $\alpha=1$. 
We find 
\begin{align}\label{pintermediatecase}
p_N(a) \sim \frac{1}{\sqrt{2 \pi}}\,\frac{1}{\sqrt{Na(a+1)}} \exp\left(-N \left(a \log \left(a \, \frac{1+\lambda}{1+a}\right) + \log\left(\frac1\lambda \, \frac{1+\lambda}{1+a}\right)\right)\right)\,.
\end{align}
It is noted that the asymptotics of $P_N(a)$ and $p_N(a)$ could have been found by applying the Bahadur-Rao result directly, as noted in Section \ref{sec:BR}:
\[
P_N(a)\sim \frac{1}{1-{\rm e}^{\vartheta^\star_Z}} \frac{1}{\sqrt{2\pi N \Lambda_Z''(\vartheta_Z^\star)}} {\rm e}^{-NI_Z(a)}=  \frac{1}{1-a\,\frac{1+\lambda}{1+a}} \frac{1}{\sqrt{2\pi N a(a+1)}} {\rm e}^{-NI_Z(a)}\,.\]
and 
\[p_N(a)=P_N(a)-P_N\left(a+\frac 1 N\right)
\sim \frac{1}{\sqrt{2\pi N a(a+1)}} {\rm e}^{-NI_Z(a)}\,,
\]
where it can be verified that $I_Z(a)$ coincides with the exponent found in (\ref{pintermediatecase}).

\subsection{Example}

In Fig.\ \ref{fig:appro} we illustrate the accuracy of the approximation, by displaying the ratio of the approximation $\check p_N(a)$ and the exact expression for $p_N(a)$. We observe that this ratio tends to 1 as $N$ grows, as expected. 

Note that the naive approximation $p_N(a)\approx \exp\big(-N^\alpha I_X(a)\big)$ obtained from the logarithmic asymptotics is still very far off the true value for the small values $N$ considered in this example. For example, with $\alpha<1$ the ratio is as high as $\sqrt{2\pi} a N^{1-\alpha/2}\check p_N(a)/p_N(a)$ (cf.~(\ref{pNalphaleq1})). 
This shows how important it can be to consider exact asymptotics instead of logarithmic asymptotics.

\begin{figure}[h]
\centering
\begin{subfigure}{.49\linewidth}\centering
%
%
\definecolor{mycolor1}{rgb}{0.00000,0.44700,0.74100}%
\begin{tikzpicture}

\begin{axis}[%
width=0.951\figurewidth,
height=\figureheight,
at={(0\figurewidth,0\figureheight)},
scale only axis,
xmin=0,
xmax=40,
xlabel={$N$},
ymin=1,
ymax=1.03,
ylabel={$\check p_N(a)\big/p_N(a)$},
axis background/.style={fill=white}
]
\addplot [color=mycolor1,solid,forget plot]
  table[row sep=crcr]{%
3	1.02757276993117\\
4	1.02013440119929\\
5	1.01613424920412\\
6	1.01351941495354\\
7	1.01164757582831\\
8	1.01023382129067\\
9	1.00912616528668\\
10	1.00823428861604\\
11	1.00750058342135\\
12	1.00688639144432\\
13	1.00636474149692\\
14	1.00591622083755\\
15	1.00552649232912\\
16	1.00518473486185\\
17	1.00488262641993\\
18	1.00461366289632\\
19	1.00437268654823\\
20	1.004155555598\\
21	1.00395890518803\\
22	1.00377997387878\\
23	1.00361647580869\\
24	1.00346650111362\\
25	1.00332844251422\\
26	1.0032009347597\\
27	1.00308281696675\\
28	1.00297308919262\\
29	1.00287089016989\\
30	1.00277546889636\\
31	1.00268618134797\\
32	1.00260244579366\\
33	1.0025237701022\\
34	1.00244970074693\\
35	1.00237984877164\\
36	1.002313867378\\
37	1.00225144232929\\
38	1.0021922886427\\
39	1.00213616184791\\
40	1.00208283143672\\
};
\end{axis}
\end{tikzpicture}%
\caption{Fast regime, $\alpha=5$.}
\end{subfigure}
\hfill
\begin{subfigure}{.49\linewidth}\centering
%
%
\definecolor{mycolor1}{rgb}{0.00000,0.44700,0.74100}%
\begin{tikzpicture}

\begin{axis}[%
width=0.951\figurewidth,
height=\figureheight,
at={(0\figurewidth,0\figureheight)},
scale only axis,
xmin=0,
xmax=160,
xlabel={$N$},
ymin=0.91,
ymax=1,
ylabel={$\check p_N(a)\big/p_N(a)$},
axis background/.style={fill=white}
]
\addplot [color=mycolor1,solid,forget plot]
  table[row sep=crcr]{%
6	0.910444243817053\\
7	0.910740122905081\\
8	0.911927546238935\\
9	0.913577855195394\\
10	0.915459493548991\\
11	0.917442153016514\\
12	0.919450628747533\\
13	0.921441053554711\\
14	0.923387989055136\\
15	0.925277108313745\\
16	0.927100902074308\\
17	0.92885608736885\\
18	0.930542007600124\\
19	0.932159626779592\\
20	0.933710888528919\\
21	0.935198303640022\\
22	0.936624683328871\\
23	0.937992966692615\\
24	0.939306109778961\\
25	0.940567015307685\\
26	0.941778489375343\\
27	0.942943216124293\\
28	0.944063744364714\\
29	0.945142482111692\\
30	0.946181696307577\\
31	0.947183515876428\\
32	0.948149936850648\\
33	0.949082828712751\\
34	0.949983941372631\\
35	0.950854912390761\\
36	0.951697274188627\\
37	0.952512461079747\\
38	0.953301816016589\\
39	0.954066596992007\\
40	0.954807983064431\\
41	0.955527079994453\\
42	0.956224925495566\\
43	0.956902494110019\\
44	0.95756070172478\\
45	0.958200409749\\
46	0.958822428971895\\
47	0.95942752312429\\
48	0.96001641216418\\
49	0.960589775307451\\
50	0.961148253823366\\
51	0.961692453613783\\
52	0.96222294759258\\
53	0.962740277883461\\
54	0.963244957848135\\
55	0.963737473962484\\
56	0.964218287549173\\
57	0.96468783638274\\
58	0.965146536174104\\
59	0.965594781946421\\
60	0.966032949310907\\
61	0.966461395649948\\
62	0.966880461215869\\
63	0.967290470152116\\
64	0.967691731442112\\
65	0.968084539792704\\
66	0.968469176456345\\
67	0.968845909997898\\
68	0.969214997009377\\
69	0.969576682776834\\
70	0.969931201902934\\
71	0.970278778890304\\
72	0.970619628684721\\
73	0.970953957185364\\
74	0.971281961722144\\
75	0.971603831502526\\
76	0.971919748031484\\
77	0.972229885504599\\
78	0.972534411177913\\
79	0.972833485715273\\
80	0.973127263514826\\
81	0.973415893015907\\
82	0.973699516988597\\
83	0.973978272806087\\
84	0.974252292701228\\
85	0.974521704008712\\
86	0.974786629393621\\
87	0.975047187066778\\
88	0.975303490988889\\
89	0.975555651063037\\
90	0.975803773316515\\
91	0.97604796007339\\
92	0.97628831011763\\
93	0.97652491884724\\
94	0.976757878421041\\
95	0.976987277897311\\
96	0.97721320336529\\
97	0.97743573807052\\
98	0.977654962532735\\
99	0.977870954659286\\
100	0.978083789851784\\
101	0.978293541107695\\
102	0.978500279118093\\
103	0.978704072358208\\
104	0.978904987176799\\
105	0.979103087878458\\
106	0.979298436803613\\
107	0.979491094404735\\
108	0.97968111931763\\
109	0.979868568431847\\
110	0.980053496954695\\
111	0.980235958476274\\
112	0.980416005027392\\
113	0.980593687138167\\
114	0.98076905389243\\
115	0.980942152979783\\
116	0.981113030746623\\
117	0.981281732242977\\
118	0.981448301268835\\
119	0.981612780418219\\
120	0.981775211120834\\
121	0.981935633682484\\
122	0.98209408732433\\
123	0.982250610218631\\
124	0.982405239524951\\
125	0.982558011424321\\
126	0.98270896115141\\
127	0.982858123026437\\
128	0.983005530484663\\
129	0.983151216105572\\
130	0.983295211640378\\
131	0.983437548039341\\
132	0.983578255476361\\
133	0.98371736337472\\
134	0.98385490043\\
135	0.983990894632633\\
136	0.984125373291081\\
137	0.984258363051864\\
138	0.984389889920081\\
139	0.984519979278713\\
140	0.984648655908698\\
141	0.984775944005211\\
142	0.984901867196611\\
143	0.985026448560884\\
144	0.985149710641544\\
145	0.985271675463701\\
146	0.985392364549148\\
147	0.985511798931208\\
148	0.985629999167506\\
149	0.985746985355143\\
150	0.985862777143154\\
151	0.985977393745145\\
152	0.986090853951613\\
153	0.98620317614149\\
154	0.986314378294253\\
155	0.986424477999958\\
156	0.986533492470637\\
157	0.986641438550646\\
158	0.986748332726201\\
159	0.986854191135165\\
160	0.986959029576433\\
};
\end{axis}
\end{tikzpicture}%
\caption{Slow regime, $\alpha=\frac{1}{5}$.}
\end{subfigure}
\caption{Ratio of approximation $\check p_N(a)$  and exact value $p_N(a)$, where $X_i$ is exponentially distributed with parameter ${\lambda}=2.5$ and $a=1$.}
\label{fig:appro}
\end{figure}
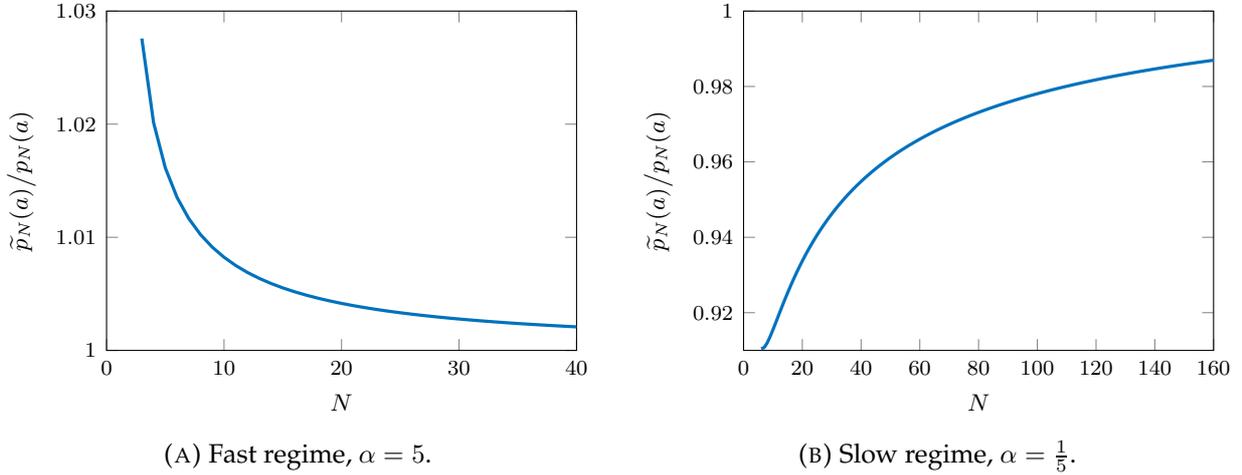

\section{Importance Sampling for $P_N(a)$}\label{sec:IS}

In the previous sections we found exact asymptotics for the rare-event probabilities $p_N(a)$ and $P_N(a)$ for (i)~a specific range of $\alpha$, and (ii)~for the specific case that the $X_i$ are exponentially distributed. To facilitate numerical evaluation (which we need, for example, if (i) and (ii) do not apply), we propose 
in this section importance sampling estimators for  $p_N(a)$ and $P_N(a)$. We establish  asymptotic efficiency properties, thus guaranteeing fast computation even for large $N$. As before, we distinguish the cases $\alpha<1$ and $\alpha>1$; the case $\alpha=1$ can be addressed by using a classical importance sampling procedure.

\subsection{Fast regime}

Recall that in this regime a rare event is typically the result of a large deviation of the Poisson random variable, while the sample mean $X_1,\dots,X_{N^\alpha}$ will typically be close to $\nu$ (under their true distribution, which we shall indicate by a subscript $\nu$). In view of this, we propose a somewhat unconventional importance sampling estimator ({\it cf.}~the more classical estimator (\ref{ISestslow}) that we will come across in the slow regime). Based on $n\in{\mathbb N}$ runs, $P_N^{(n)}(a)$ can be unbiasedly estimated by
\begin{align}\label{ISestfast}
\hat P_N^{(n)}(a)=\frac{1}{n} \sum_{i=1}^n \frac{\pp\big(\Pois\big(N \overline X_{N^\alpha,i}\big)=Z_i\big)}{\pp\left(\Pois(Na)=Z_i\right)} \,\mathbbm{1}\left\{Z_i\geq Na\right\}\,,
\end{align}
 where (i)~$Z_1,\dots,Z_n\sim \Pois(Na)$ (independently sampled), and (ii)~$\overline X_{N^\alpha,1},\ldots,\overline X_{N^\alpha,n}$ independently sampled under the original measure. 
 
Observe that the contribution from the $i$th run depends on $\overline X_{N^\alpha,i}$ as well as $Z_i$. 
It is therefore easier to analyze the corresponding estimator for $p_N(a)$,
 \[ \hat p^{(n)}_N(a):=\frac{1}{n} \sum_{i=1}^n \frac{\pp\left(\Pois\left(N \overline X_{N^\alpha,i}\right)=Z_i\right)}{\pp\left(\Pois(Na)=Z_i\right)} \,\mathbbm{1}\left\{Z_i= Na\right\}\,,\]
 which does not depend on the specific value of $Z_i$ (as it is $Na$ with certainty).
 We later comment on efficient estimation of $P_N(a)$.
 
 The contribution due to the likelihood ratio of the $i$th run
 is 
 \[ L\big(\overline X_{N^\alpha,i}\big):=\left(\frac{\overline X_{N^\alpha,i}}{a}\right)^{Na} {\rm e}^{N(a\,-\,\overline X_{N^\alpha,i})}\,.\]
The variance of the estimator (with respect to the joint distribution of $Z\sim\Pois(Na)$ and $\barxa$) can be evaluated to be
\begin{align}\label{varfast} 
\frac 1 n \,\ee\left[\left(L\big(\barxa\big)\mathbbm{1}\left\{Z= Na\right\}\right)^2\right]-p_N(a)^2
=\frac 1 n \,\ee\left[L^2\big(\barxa\big)\mathbbm{1}\left\{Z= Na\right\}\right]-p_N(a)^2\,,
\end{align}
with $Z$ distributed as each of the $Z_i$, and $\barxa$ as each of the $\overline X_{N^\alpha,i}$.
As we have seen in the introduction of Section
\ref{sec:AboutP}, the logarithmic decay rate of $p_N(a)^2$ is $-2 I(a\,|\,\nu)$.
Since the variance is always non-negative, this implies that the first term in (\ref{varfast}) vanishes no faster than with exponential rate $-2\,I(a\,|\,\nu)$. This motivates the following notion of \emph{asymptotic efficiency}
(or logarithmical efficiency), as suggested in e.g.\
\cite{sadowsky1990asymptefficient}. 

\begin{proposition}\label{thm:efficiencyfast}
The estimator $\hat p_N^{(n)}(a)$ is asymptotically efficient
 for estimating $p_N(a)$; that is
\[ \limsup_{N\to\infty} \frac 1 N \log \ee\left[L^2\left(\barxa\right)\mathbbm{1}\left\{Z= Na\right\}\right] \leqslant-2I(a\,|\,\nu)\,.\]
\end{proposition}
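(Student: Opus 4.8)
The plan is to work directly with the likelihood ratio $L(\barxa)=(\barxa/a)^{Na}\,{\rm e}^{N(a-\barxa)}$ appearing in the variance expression. A one-line computation identifies it with the Poisson rate function: since $I(a\,|\,x)=a\log(a/x)-a+x$, one has $-N I(a\,|\,x)=Na\log(x/a)+N(a-x)$, so that $L(x)={\rm e}^{-N I(a\,|\,x)}$ and hence $L^2(\barxa)={\rm e}^{-2N I(a\,|\,\barxa)}$. Bounding $\mathbbm 1\{Z=Na\}\leqslant1$, it therefore suffices to prove
\[
\limsup_{N\to\infty}\frac1N\log\ee\big[{\rm e}^{-2N I(a\,|\,\barxa)}\big]\leqslant -2\,I(a\,|\,\nu).
\]
(Equivalently one could keep the indicator and use that $Z$ is independent of $\barxa$ with $\pp(Z=Na)$ subexponential by Stirling; this changes nothing in the bound.)

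To estimate this expectation I would fix a small $\varepsilon>0$ with $\nu+\varepsilon<a$ and split according to whether $\barxa\leqslant\nu+\varepsilon$ or $\barxa>\nu+\varepsilon$. On the first event I use that $x\mapsto I(a\,|\,x)$ is convex with a unique minimum (equal to $0$) at $x=a$, hence strictly decreasing on $(0,a)$; therefore $I(a\,|\,\barxa)\geqslant I(a\,|\,\nu+\varepsilon)$ on the whole event, so this part contributes at most ${\rm e}^{-2N I(a\,|\,\nu+\varepsilon)}$. On the second event I simply bound ${\rm e}^{-2N I(a\,|\,\barxa)}\leqslant1$ (as $I\geqslant0$) and control the probability with the Chernoff bound already used in Section~\ref{sec:AboutP:fast}, namely $\pp(\barxa>\nu+\varepsilon)\leqslant {\rm e}^{-N^{\alpha} I_X(\nu+\varepsilon)}$, where $I_X(\nu+\varepsilon)>0$ because $\nu+\varepsilon>\nu=\ee X$.

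Combining the two pieces gives
\[
\ee\big[{\rm e}^{-2N I(a\,|\,\barxa)}\big]\leqslant {\rm e}^{-2N I(a\,|\,\nu+\varepsilon)}+{\rm e}^{-N^{\alpha} I_X(\nu+\varepsilon)},
\]
so that $\tfrac1N\log\ee[{\rm e}^{-2N I(a\,|\,\barxa)}]\leqslant \tfrac{\log2}{N}+\max\{-2I(a\,|\,\nu+\varepsilon),\,-N^{\alpha-1}I_X(\nu+\varepsilon)\}$. This is the one place where the fast regime is essential: since $\alpha>1$ and $I_X(\nu+\varepsilon)>0$, the second term in the maximum tends to $-\infty$, and the $\limsup$ is at most $-2I(a\,|\,\nu+\varepsilon)$. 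Letting $\varepsilon\downarrow0$ and invoking continuity of $I(a\,|\,\cdot)$ at $\nu$ (valid since $\nu>0$) yields the claimed bound $-2I(a\,|\,\nu)$.

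The argument is essentially routine once the identity $L={\rm e}^{-N I(a\,|\,\cdot)}$ is in hand; the only genuinely substantive point — which I would stress — is that $L^2(\barxa)$ takes its largest value (of order $1$) precisely when $\barxa\approx a$, i.e.\ far out in the tail of $\barxa$, and that in the fast regime the probability of that tail event, ${\rm e}^{-N^{\alpha}I_X(\nu+\varepsilon)}$, decays faster than any ${\rm e}^{-cN}$, so it cannot overwhelm the decay rate $-2I(a\,|\,\nu)$ contributed by the bulk event $\barxa\approx\nu$. (This is also why the analogous claim in the slow regime $\alpha<1$ requires the quite different estimator and analysis indicated in the next subsection.)
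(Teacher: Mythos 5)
Your proof is correct and follows essentially the same route as the paper's: bound the indicator by one, split according to whether $\barxa$ is near $\nu$, use $L^2\left(\barxa\right)={\rm e}^{-2N I(a\,|\,\barxa)}\leqslant 1$ together with a Chernoff bound of order ${\rm e}^{-N^{\alpha} I_X(\nu+\ve)}$ on the deviation event (which is exactly where $\alpha>1$ enters), and finally let $\ve\downarrow 0$. The only cosmetic difference is that you exploit monotonicity of $I(a\,|\,\cdot)$ on $(0,a)$ to make a one-sided split and bound the bulk contribution directly by ${\rm e}^{-2NI(a\,|\,\nu+\ve)}$, whereas the paper splits on the two-sided event $\left\{\left|\,\barxa-\nu\,\right|<\ve\right\}$ and bounds the two factors of $L^2$ at the two endpoints; both give the same limit.
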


\begin{proof}
First, note that
\begin{align*}
\ee\left[L^2\big(\barxa\big)\mathbbm{1}\left\{Z= Na\right\}\right]
&=\ee_\nu\left[ \left(\frac{\barxa}{a}\right)^{2Na} {\rm e}^{2N (a-\barxa )} \right]\pp(Z=Na)\\
&\leqslant\ee_\nu\left[ \left(\frac{\barxa}{a}\right)^{2Na} {\rm e}^{2N (a-\barxa )} \right]\,. 
\end{align*}
Define $\cf_{\ve}^{(N)}:=\left\{\barxa\in(\nu-\ve,\nu+\ve)\right\}$, where $\ve>0$. 
Then 
\begin{align}\label{onF}
\ee_\nu\left[  \left(\frac{\barxa}{a} \right)^{2Na} {\rm e}^{2N\,(a-\barxa )} \mathbbm{1}\left\{\cf_\ve^{(N)}\right\}\right]
& \leqslant\left(\frac{\nu+\ve}{a}\right)^{2Na} {\rm e}^{2N\left(a-\nu+\ve\right)}\,.
\end{align}
On the other hand, we have
\begin{align*}
\ee_\nu\left[ \left(\frac{\barxa}{a} \right)^{2Na} {\rm e}^{2N\,(a-\barxa )}\mathbbm{1}\left\{\left(\cf_\ve^{(N)}\right)^c\right\} \right]= \ee_\nu\left[ {\rm e}^{-2NI\big(a\,\big|\,\barxa\big)}\mathbbm{1}\left\{\left(\cf_\ve^{(N)}\right)^c\right\}\right]\leqslant \pp\left(\left[\cf_\ve^{(N)}\right]^c\right)\,.
\end{align*}
where the last inequality is due to $I(a\,|\,x)\geq 0$ for any $x$. Invoking Chernoff's bound, we note that
\[ \pp\left(\left[\cf_\ve^{(N)}\right]^c\right)\leqslant2\exp\left({-N^{\alpha}j_\ve}\right),\:\:\mbox{where}\:\:
j_\ve:=\inf_{x\not
\in(\nu-\ve,\nu+\ve)}I_X(x)>0.
 \]
We conclude that for $\alpha>1$,
\[ \limsup_{N\to \infty} \frac{N^\alpha}{N} \frac{1}{N^\alpha}\log  \pp\left(\left[\cf_\ve^{(N)}\right]^c\right)\leqslant\limsup_{N\to\infty} -\frac{N^\alpha}{N} j_\ve=   -\infty\,. \]
Combining this with (\ref{onF}), we conclude that 
\begin{align*}
\limsup_{N\to\infty} \frac 1 N \log \ee\left[\left(L\left(\barxa\right)\mathbbm{1}\left\{Z= Na\right\}\right)^2\right] \leqslant2a \log\left(\frac {\nu+\ve}{ a}\right) +2(a-\nu+\ve).
\end{align*}
The desired result follows when taking $\ve\downarrow 0.$
\end{proof}

Formally, this result on asymptotic efficiency for $\hat p_N^{(n)}(a)$ does not imply asymptotic efficiency for $\hat P_N^{(n)}(a)$. In practice, however, we can use 
\[ \hat P_N^{(n)}(a)=\sum_{k=Na}^K \hat p_N^{(n)}(k/N),\]with  $K$ sufficiently large,
to estimate $P_N^{(n)}(a)$.

\subsection{Slow regime}

In the slow regime, assuming that $b_+\geq a$, the rare event is typically caused by a large deviation of $\overline X_{N^\alpha}$. 
Suppose that $\overline X_{N^\alpha,1},\ldots,\overline X_{N^\alpha,n}$ are independently sampled according to the original measure $\pp_\nu$ (where the subscript indicates that the expectation of each of the sample means $\overline X_{N^\alpha,i}$ involved is $\nu$). In this case we suggest the estimator
\begin{align}\label{ISestslow}
\hat P_N^{(n)}(a)=\frac{1}{n} \sum_{i=1}^n \frac{\pp_\nu\left(\overline X_{N^\alpha,i}\in{\rm d}Y_i\right)}{\pp_a\left(\overline X_{N^\alpha,i}\in{\rm d}Y_i\right)} \,\mathbbm{1}\left\{\Pois\left(NY_i\right)\geq Na\right\}\,,
\end{align}
where $Y_1,\dots,Y_n\sim \pp_a$. 
The measure ${\mathbb P}_a$ corresponds to the exponentially twisted version such that the mean becomes $a$ (rather than $\nu$). 

For each run we have the likelihood ratio, with $\vec{y}= (y_1,\ldots,y_{N^\alpha})$,
\[ L(\vec{y}\,)=\prod_{i=1}^{N^\alpha} M_X(\vartheta_a) \,{\rm e}^{-\vartheta_a y_i}\,,\]
where we recall that $M_X(\cdot)$ is the moment-generating function of $X$ and $\vartheta_a$ is the unique solution to 
\[ \ee_a[X]=\ee_\nu\left[X \frac{{\rm e}^{\vartheta X}}{M_X(\vartheta)}\right]= \frac{M_X'(\vartheta)}{M_X(\vartheta)} =a\,.\]
In this case we have seen before that $N^{-\alpha}\log P_N(a)\to -I_X(a)$ as $N\to\infty$.

\begin{proposition}
The estimator $\hat P_N^{(n)}(a)$ is asymptotically efficient for estimating $P_N(a)$; that is 
\[ \limsup_{N\to\infty}\frac{1}{N^\alpha} \log\ee_a\left[\left(L(\vec{X}\,) \mathbbm{1}\left\{\Pois\left(N\barxa\right)\geq Na\right\}\right)^2\right]\leqslant-2 I_X(a)\,.\]
\end{proposition}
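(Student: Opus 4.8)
The plan is to integrate out the Poisson variable, reduce the second moment to a one-dimensional Laplace-type estimate in $\barxa$, and control it by splitting the range of $\barxa$ into three pieces. Under the sampling measure the vector $\vec X=(X_1,\dots,X_{N^\alpha})$ has law $\pp_a$, while conditionally on $\vec X$ the Poisson variable still has parameter $N\barxa$; hence, conditioning on $\vec X$ and using $\mathbbm{1}\{\cdot\}^2=\mathbbm{1}\{\cdot\}$,
\begin{align*}
\ee_a\!\left[\left(L(\vec X)\,\mathbbm{1}\{\Pois(N\barxa)\geq Na\}\right)^{2}\right]
&=\ee_a\!\left[L^{2}(\vec X)\,\psi_N(a\,|\,\barxa)\right]\\
&=\ee_\nu\!\left[L(\vec X)\,\psi_N(a\,|\,\barxa)\right],
\end{align*}
the last equality because $L={\rm d}\pp_\nu/{\rm d}\pp_a$ on the $\sigma$-field generated by $\vec X$ and $\psi_N(a\,|\,\barxa)$ is $\vec X$-measurable. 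Next I would write the likelihood ratio in closed form: $L(\vec X)=M_X(\vartheta_a)^{N^\alpha}{\rm e}^{-\vartheta_a N^\alpha\barxa}$, and since $\vartheta_a$ is the maximizer defining $I_X(a)$ we have $I_X(a)=\vartheta_a a-\Lambda_X(\vartheta_a)$, so $L(\vec X)={\rm e}^{-N^\alpha I_X(a)}\,{\rm e}^{N^\alpha\vartheta_a(a-\barxa)}$. Thus the claim reduces to $\limsup_{N\to\infty}N^{-\alpha}\log R_N\leq -I_X(a)$, where $R_N:=\ee_\nu\!\left[{\rm e}^{N^\alpha\vartheta_a(a-\barxa)}\psi_N(a\,|\,\barxa)\right]$.

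To bound $R_N$ I would use the elementary inequalities $\psi_N(a\,|\,x)\leq 1$ and, by Chernoff, $\psi_N(a\,|\,x)\leq {\rm e}^{-N\,I(a\,|\,x)}$ for $x<a$, together with $\pp_\nu(\barxa\geq y)\leq {\rm e}^{-N^\alpha I_X(y)}$ for $y>\nu$. Fix $\ve\in(0,a-\nu)$ and split according to the value of $\barxa$. On $\{\barxa\geq a\}$ both exponential factors are at most $1$ (recall $\vartheta_a>0$, as $a>\nu$), so this contribution is bounded by $\pp_\nu(\barxa\geq a)\leq {\rm e}^{-N^\alpha I_X(a)}$. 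On $\{a-\ve\leq\barxa<a\}$ we have ${\rm e}^{N^\alpha\vartheta_a(a-\barxa)}\leq {\rm e}^{N^\alpha\vartheta_a\ve}$ and $\psi_N\leq 1$, so this contribution is at most ${\rm e}^{N^\alpha\vartheta_a\ve}\,\pp_\nu(\barxa\geq a-\ve)\leq {\rm e}^{-N^\alpha(I_X(a-\ve)-\vartheta_a\ve)}$. On $\{\barxa<a-\ve\}$ we bound ${\rm e}^{N^\alpha\vartheta_a(a-\barxa)}\leq {\rm e}^{N^\alpha\vartheta_a a}$ (using $\barxa\geq 0$) and $\psi_N(a\,|\,\barxa)\leq {\rm e}^{-Nc_\ve}$ with $c_\ve:=I(a\,|\,a-\ve)>0$ (using that $I(a\,|\,\cdot)$ is decreasing on $(0,a)$), so this contribution is at most ${\rm e}^{N^\alpha\vartheta_a a-Nc_\ve}$, whose $N^{-\alpha}$-scaled logarithm is $\vartheta_a a-N^{1-\alpha}c_\ve\to-\infty$ since $\alpha<1$.

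Combining the three pieces gives $\limsup_{N\to\infty}N^{-\alpha}\log R_N\leq\max\{-I_X(a),\,-(I_X(a-\ve)-\vartheta_a\ve)\}$ for every admissible $\ve$; letting $\ve\downarrow 0$ and using continuity of $I_X$ at $a$ yields $\limsup_{N\to\infty}N^{-\alpha}\log R_N\leq -I_X(a)$, and multiplying back by ${\rm e}^{-N^\alpha I_X(a)}$ produces the asserted bound $-2I_X(a)$. The main obstacle is the middle region $\{a-\ve\leq\barxa<a\}$: this is exactly where the rare event concentrates, so the likelihood ratio is genuinely exponentially large there and the Poisson factor provides no usable decay; the point is that this growth, of order ${\rm e}^{N^\alpha\vartheta_a\ve}$, is swallowed by the Chernoff cost ${\rm e}^{-N^\alpha I_X(a-\ve)}$ once $\ve$ is sent to zero. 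A sharper computation along the lines of Case~II in Section~\ref{sec:AboutP:slow} would reveal that the window $\barxa\approx a-O(N^{\alpha-1})$ only contributes a sub-exponential correction, invisible at the logarithmic scale, so the crude split above is enough.
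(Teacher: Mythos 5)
Your proof is correct, and it reaches the bound by a slightly different bookkeeping than the paper. The paper stays under the sampling measure $\pp_a$ throughout: it writes $L^2(\vec X)=M_X(\vartheta_a)^{2N^\alpha}{\rm e}^{-2\vartheta_a N^\alpha\barxa}$ and splits only into two events, $\{\barxa>a-\ve\}$ and its complement; on the first it bounds the indicator by $1$ and the exponential factor deterministically by ${\rm e}^{-2\vartheta_a N^\alpha(a-\ve)}$ (using $\vartheta_a>0$), while on the complement it bounds the exponential by $1$ and uses the Poisson Chernoff bound $\pp\left(\Pois(N(a-\ve))\geq Na\right)\leqslant{\rm e}^{-N I(a\,|\,a-\ve)}$, which is negligible on the $N^\alpha$ scale since $\alpha<1$; letting $\ve\downarrow 0$ then gives $2\Lambda_X(\vartheta_a)-2\vartheta_a a=-2I_X(a)$ without ever needing a large-deviations bound for $\barxa$ or continuity of $I_X$ at $a$. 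You instead pull the second moment back to the original measure via $\ee_a\left[L^2 g(\vec X)\right]=\ee_\nu\left[L\,g(\vec X)\right]$, which turns the problem into bounding $\ee_\nu\left[{\rm e}^{N^\alpha\vartheta_a(a-\barxa)}\psi_N(a\,|\,\barxa)\right]$, and you then need a three-way split and the Cram\'er--Chernoff bound $\pp_\nu(\barxa\geq y)\leqslant{\rm e}^{-N^\alpha I_X(y)}$ at $y=a$ and $y=a-\ve$, plus left-continuity of $I_X$ at $a$ when sending $\ve\downarrow 0$ (which you could avoid by instead using convexity, $I_X(a-\ve)\geq I_X(a)-\vartheta_a\ve$). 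Both arguments hinge on the same two mechanisms --- $\vartheta_a>0$ kills the likelihood ratio when $\barxa$ is near or above $a$, and the Poisson tail decays at rate $N\gg N^\alpha$ when $\barxa$ is bounded away below $a$ --- so the difference is one of economy rather than substance: the paper's version is leaner, while yours makes the standard importance-sampling identity $\ee_a[L^2\,\cdot\,]=\ee_\nu[L\,\cdot\,]$ explicit, which is a perfectly sound and arguably more transparent starting point.
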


\begin{proof}
Note that 
\begin{align*} 
\ee_a\left[\left(L(\vec{X}) \mathbbm{1}\left\{\Pois\left(N\barxa\right)\geq Na\right\}\right)^2\right]&= M(\vartheta_a)^{2N^\alpha} \,\ee_a\left[ {\rm e}^{-2\vartheta_a N^\alpha \barxa} \mathbbm{1}\left\{\Pois\left(N\barxa\right)\geq Na\right\}\right].
\end{align*}
On $\cf_\ve^{(N)}:=\left\{\barxa\in(a-\ve,\infty)\right\}$ we have \begin{align*}
\ee_a\left[ {\rm e}^{-2\vartheta_a N^\alpha \barxa} \mathbbm{1}\left\{\Pois\left(N\barxa\right)\geq Na\right\} \mathbbm{1}\left\{\cf_\ve^{(N)}\right\}\right]\leqslant {\rm e}^{-2\vartheta_a N^\alpha (a-\ve)}
,
\end{align*}
while outside of $\cf_\ve^{(N)}$ we have
\begin{align*}
&\ee_a\left[ {\rm e}^{-2\vartheta_a N^\alpha \barxa}  \mathbbm{1}\left\{\Pois\left(N\barxa\right)\geq Na\right\}\mathbbm{1}\left\{\left[\cf_\ve^{(N)}\right]^c \right\}\right] \leqslant  \pp_a\left(\Pois\left(N (a-\ve)\right)\geq Na\right),
\end{align*}
where we used that $\vartheta_a>0$ because $a>\nu$ \cite[Lemma 2.2.5]{DZ1998}. 
By virtue of the Chernoff bound, 
\[ \pp_a\left(\Pois\left(N (a-\ve)\right)\geq Na\right)\leq {\rm e}^{-N I(a\,|\,a-\ve)},\:\:\mbox{where }\: I(a\,|\,a-\ve)>0.\]
This implies that
\begin{eqnarray*}
\lefteqn{\hspace{-4cm}
\limsup_{N\to\infty} \frac{1}{N^\alpha} \log \ee_a\left[ {\rm e}^{-2\vartheta_a N^\alpha \barxa}  \mathbbm{1}\left\{\Pois\left(N\barxa\right)\geq Na\right\}\mathbbm{1}\left\{\left[\cf_\ve^{(N)}\right]^c \right\}\right]}\\
&\leqslant& \limsup_{N\to\infty}-\frac{N}{N^\alpha}\, I(a\,|\,a-\ve) =-\infty.
\end{eqnarray*}

We let first $N\to\infty$ and then $\ve\downarrow 0$, to conclude that
\begin{align*}
\limsup_{N\to\infty}\frac{1}{N^\alpha} \log\ee_a\left[\left(L(\vec X) \mathbbm{1}\left\{\Pois\left(N\barxa\right)\geq Na\right\}\right)^2\right] \leqslant2\log M_X(\vartheta_a)-2\vartheta_a a =-2I_X(a)\,,
\end{align*}
as claimed.
\end{proof}

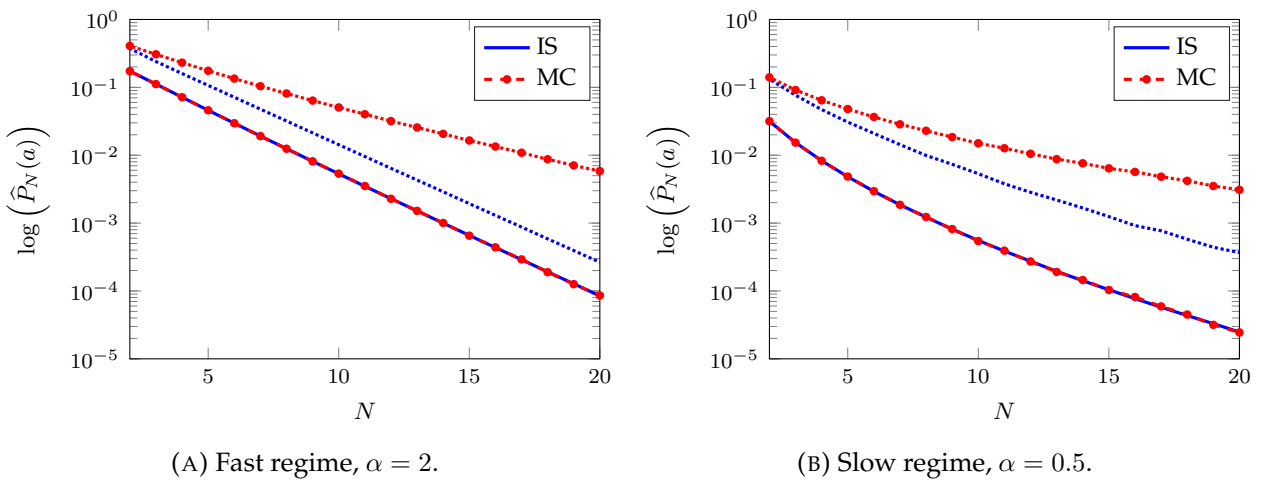
\begin{figure}[b]
\centering
\begin{subfigure}{.49\linewidth}\centering
%
%
\begin{tikzpicture}

\begin{axis}[%
width=0.951\figurewidth,
height=\figureheight,
at={(0\figurewidth,0\figureheight)},
scale only axis,
xmin=2,
xmax=20,
xlabel={$N$},
ymode=log,
ymin=1e-05,
ymax=1,
yminorticks=true,
ylabel={$\log\left(\hat P_N(a)\right)$},
axis background/.style={fill=white},
legend style={legend cell align=left,align=left,draw=white!15!black}
]
\addplot [color=blue,solid]
  table[row sep=crcr]{%
1	0.257541234337259\\
2	0.173273031129006\\
3	0.1117737663406\\
4	0.0715096806534326\\
5	0.0459026160761325\\
6	0.0296427519696038\\
7	0.0191748663853226\\
8	0.0124766618604869\\
9	0.00812831953319956\\
10	0.00532012488173537\\
11	0.00348933442988657\\
12	0.00229275952757199\\
13	0.00151275680907094\\
14	0.000998201742444984\\
15	0.00065904956424313\\
16	0.00043601526108768\\
17	0.000289145455611087\\
18	0.000191654069712327\\
19	0.00012752649366114\\
20	8.46835874059077e-05\\
};
\addlegendentry{IS};

\addplot [color=red,dashed,mark=*,mark options={scale=0.5,solid}]
  table[row sep=crcr]{%
1	0.2498301\\
2	0.1733532\\
3	0.1116705\\
4	0.0715733\\
5	0.04596\\
6	0.0295975\\
7	0.0191173\\
8	0.012472\\
9	0.0081299\\
10	0.0053547\\
11	0.0035193\\
12	0.0022823\\
13	0.0015155\\
14	0.0010044\\
15	0.000652\\
16	0.0004394\\
17	0.000292\\
18	0.0001894\\
19	0.0001259\\
20	8.59e-05\\
};
\addlegendentry{MC};

\addplot [color=blue,densely dotted,forget plot]
  table[row sep=crcr]{%
1	12.178990747914\\
2	0.374364765392218\\
3	0.240034406374641\\
4	0.159847529121203\\
5	0.106799031534158\\
6	0.071465411021558\\
7	0.0477287013570783\\
8	0.0319484820467816\\
9	0.0213468092783677\\
10	0.0142930900618132\\
11	0.00957648588273201\\
12	0.00640702779336917\\
13	0.00430613226035744\\
14	0.0028930008913035\\
15	0.00193548971039258\\
16	0.0012999004293301\\
17	0.00087429535783344\\
18	0.000585731607574214\\
19	0.000394452995372994\\
20	0.000264956141655423\\
};
\addplot [color=red,densely dotted,mark=*,mark options={scale=0.5,solid},forget plot]
  table[row sep=crcr]{%
1	0.518152495743764\\
2	0.407981150274116\\
3	0.306884617906032\\
4	0.231346664553585\\
5	0.17574798723123\\
6	0.134638158057601\\
7	0.103991624259764\\
8	0.0812578235959149\\
9	0.0637872838572574\\
10	0.0505876284617451\\
11	0.0402236972916134\\
12	0.0318583358256339\\
13	0.0256259786440347\\
14	0.0206374801923142\\
15	0.0164732504222936\\
16	0.0134288629741324\\
17	0.0108816264287226\\
18	0.00871853461643708\\
19	0.00708000054416662\\
20	0.00583012438853375\\
};
\end{axis}
\end{tikzpicture}%
\caption{Fast regime, $\alpha=2$.}
\end{subfigure}
\hfill
\begin{subfigure}{.49\linewidth}\centering
%
%
\begin{tikzpicture}

\begin{axis}[%
width=0.951\figurewidth,
height=\figureheight,
at={(0\figurewidth,0\figureheight)},
scale only axis,
xmin=2,
xmax=20,
xlabel={$N$},
ymode=log,
ymin=1e-05,
ymax=1,
yminorticks=true,
ylabel={$\log\left(\hat P_N(a)\right)$},
axis background/.style={fill=white},
legend style={legend cell align=left,align=left,draw=white!15!black}
]
\addplot [color=blue,solid]
  table[row sep=crcr]{%
1	0.0816924062626448\\
2	0.031645000237114\\
3	0.0153355543542141\\
4	0.00830347222690131\\
5	0.00482010263142992\\
6	0.00293451551612167\\
7	0.00186100159116158\\
8	0.00122313781430492\\
9	0.000813088269992746\\
10	0.000554500307864043\\
11	0.000386802637158802\\
12	0.000274159634926092\\
13	0.000195005743927094\\
14	0.000141458364520637\\
15	0.000104136947463776\\
16	7.71253662203043e-05\\
17	5.75983190172043e-05\\
18	4.34687023557782e-05\\
19	3.2999648191763e-05\\
20	2.48325235442429e-05\\
};
\addlegendentry{IS};

\addplot [color=red,dashed,mark=*,mark options={scale=0.5,solid}]
  table[row sep=crcr]{%
1	0.0815232\\
2	0.03173\\
3	0.0153232\\
4	0.0082806\\
5	0.0048504\\
6	0.0029522\\
7	0.0018601\\
8	0.0012298\\
9	0.0008158\\
10	0.0005444\\
11	0.0003925\\
12	0.0002721\\
13	0.000191\\
14	0.0001449\\
15	0.0001034\\
16	8.15e-05\\
17	5.92e-05\\
18	4.49e-05\\
19	3.17e-05\\
20	2.45e-05\\
};
\addlegendentry{MC};

\addplot [color=blue,densely dotted,forget plot]
  table[row sep=crcr]{%
1	0.262239880278689\\
2	0.132148058965068\\
3	0.0765636407968221\\
4	0.0470700661911483\\
5	0.0307756014682522\\
6	0.0209746680468316\\
7	0.0143264363191715\\
8	0.00992359421038451\\
9	0.00734549539716365\\
10	0.00534160861520568\\
11	0.00379134453948535\\
12	0.00284294742350659\\
13	0.00218766222786955\\
14	0.00166969983459211\\
15	0.00124596863196973\\
16	0.000918427490160608\\
17	0.000769960736211126\\
18	0.000578523324285712\\
19	0.000440549181260732\\
20	0.000367585479122495\\
};
\addplot [color=red,densely dotted,mark=*,mark options={scale=0.5,solid},forget plot]
  table[row sep=crcr]{%
1	0.251124787728228\\
2	0.140370111779765\\
3	0.0914569222920224\\
4	0.064447782374837\\
5	0.0479118322244084\\
6	0.0365788629944778\\
7	0.0285668686321424\\
8	0.0229519917512595\\
9	0.0185114950151745\\
10	0.0150021592029045\\
11	0.0126693623752888\\
12	0.0104947190947896\\
13	0.0087560601248878\\
14	0.00760524754374577\\
15	0.00640562110996742\\
16	0.00567671448177791\\
17	0.00482791648485345\\
18	0.00419800998488442\\
19	0.00352128161099865\\
20	0.00309234766135371\\
};
\end{axis}
\end{tikzpicture}%
\caption{Slow regime, $\alpha=0.5$.}
\end{subfigure}
\caption{Logarithmic importance sampling (IS) and crude Monte Carlo (MC) estimators for $P_N(a)$, where $X_i$ is exponentially distributed with parameter ${\lambda_{\alpha}}$ (where $\lambda_{2}=1$, $\lambda_{0.5}=2.5$) and $a=2$, averaged over $n= 10^7$ samples. The upper bounds of the sample confidence intervals are indicated by dashed lines; the width of the intervals is inflated by a factor $10^3$ for better visibility.}
\label{fig:IS}
\end{figure}

\subsection{Numerical example}

We provide a numerical example with exponentially distributed $X_i$. Specifically, we consider $X_i\sim \mbox{Exp}(1)$, $a=2$, and $\alpha\in\{0.5,2\}$. Fig.~\ref{fig:IS}
shows the logarithm of $\hat P_N(a)$ as well as the corresponding crude Monte Carlo estimators, as a function of $N$. We generated $\sum_{i=1}^{N^\alpha} X_i$ by drawing from the gamma distribution with parameters $N^{\alpha}$ and $1/\lambda$. This allowed us to include values of $N$ for which $N^\alpha\notin\nn$ in Fig.~\ref{fig:IS}.({\sc b}). The dotted lines in the figures indicate the upper bounds of the standard normal $95\%$ confidence intervals evaluated using sample standard deviations (multiplied by a factor $10^3$ to make them visible). It can be seen that for the importance sampling estimator the width of the confidence interval hardly depends on $N$. In contrast, for the Monte Carlo estimator the width of the confidence interval increases significantly.

\section{Asymptotics for infinite-server system, and implications for staffing} \label{sec:AboutQ}

In this section we investigate the asymptotic behavior of $Q_N(a)$ ($q_N(a)$), the probability that the number of clients in the system exceeds (equals) some threshold $Na$. We consider the scaled system previously studied in \cite{heemskerk2016scaling}. 

We start by presenting the logarithmic asymptotics, which can be identified with exactly the same techniques as in \cite[Section 4.1]{heemskerk2016scaling}. 
As before, we distinguish three cases (where it is noted that $q_N(a)$ has the same logarithmic asymptotics as $Q_N(a)$); the intuition behind the three regimes is as before.
\begin{itemize}
\item[$\circ$] For $\alpha>1$,
\[\lim_{N\to\infty} \frac{1}{N}\log Q_N(a) = -I\left(a\,\left|\,\nu\int_0^1 \overline F(x){\rm d}x\right.\right),\]
where $\overline F(\cdot)$ denotes the complementary distribution function of the service times.
\item[$\circ$] For $\alpha<1$, assuming the support of $X_i$ is unbounded,
\[\lim_{N\to\infty} \frac{1}{N^\alpha}\log Q_N(a) = -\sup_{\vartheta}\left(\vartheta a - \int_0^1 \Lambda_X\left(\vartheta \overline F(x)\right){\rm d}x\right).\]
\item[$\circ$] For $\alpha=1$, \begin{equation}\lim_{N\to\infty} \frac{1}{N}\log Q_N(a) = -\sup_\vartheta\left(\vartheta a - \int_0^1 \Lambda_X\left(({\rm e}^\vartheta-1) \overline F(x)\right){\rm d}x\right).\label{a1}\end{equation}
\end{itemize}

In the remainder of this section, we first determine the exact asymptotics for 
the special case $\alpha=1$. That is, we assume that the arrival rates are resampled every $1/N$ time units, and we are interested in the number of customers present at time $1$ (that is, after $N$ time periods of length $1/N$). As it turns out, the case $\alpha\not = 1$ is considerably harder to deal with, and therefore left for future research. We conclude this section by a set of numerical experiments.

\subsection{Exact asymptotics}

As mentioned in the introduction ({\it viz.}\ Eqn.\ (\ref{scaledparameter})), under the scaling of \cite{heemskerk2016scaling} the number of clients in the system at time $1$ is distributed as the sum of $N$ Poisson random variables, say, $Z_1$ up to $Z_N$,  where $Z_i$ can be interpreted as the contribution due to arrivals in the interval $[(i-1)/N,i/N)$;
for details we refer to \cite{heemskerk2016scaling}.
Then it can be argued that
\[ Z_i \stackrel{\rm d}{=} {\rm Pois}\left(N X_i\cdot N^{-1}\omega_i(N)\right)=\Pois\left(X_i\,\omega_i(N)\right),\]
where we defined
$\omega_i(N)$ as the probability that a call that arrived at a uniform epoch in the interval $[(i-1)/N,i/N)$ is still present at time 1. It can be verified that
\[\omega_i(N) = {N}\int_{(i-1)/N}^{i/N} \overline{F}(1-x){\rm d}x;\]
because the $X_i$ are i.i.d., we can reverse time, and hence replace $ \overline{F}(1-x)$ in the previous display by $ \overline{F}(x).$

We now wish to evaluate
\[Q_N(a)= {\mathbb P}\left({\rm Pois}\left(\sum_{i=1}^{N} X_i\, \omega_i(N)\right)\geqslant Na\right),\:\:\:
q_N(a)= {\mathbb P}\left({\rm Pois}\left(\sum_{i=1}^{N} X_i \,\omega_i(N)\right)= Na\right).\:\:\:
\]
Let $S_N=\sum_{i=1}^N Z_i$, where
$Z_i\stackrel{\rm d}{=} {\rm Pois}\left(X\, \omega_i(N)\right),$
with the $Z_i$ independent; hence $q_N(a) = {\mathbb P}(S_N= Na)$. 
It is immediately verified that, with $M_X(\cdot)$ the moment generating function of the $X_i$,
\begin{equation}\label{MGFSN}
{\mathbb E}\left[{\rm e}^{\vartheta S_N}\right]=\prod_{i=1}^N M_X\left(\omega_i(N) (\mathrm{e}^\vartheta -1)\right).
\end{equation}
Bearing in mind 
(\ref{a1}), we define
\begin{equation*}\label{def:varthetastar}\vartheta^\star :=\arg\sup_\vartheta \left\{\vartheta a - \int_0^1 \Lambda_X \left(\overline{F}(x) ({ \rm e}^\vartheta-1)\right){\rm d}x\right\}.
\end{equation*}

The idea is now that we construct a measure ${\mathbb Q}$, under which the event of interest is not rare so that a central limit theorem applies. Concretely, we choose $\mathbb{Q}$ to be an $\vartheta^\star$-twisted version of 
the original measure such that $S_N$ has moment generating function ({\it cf.}~(\ref{MGFSN}))
\begin{equation}\label{MGF:Q}{\mathbb E}_{\mathbb Q}\left[{\rm e}^{\vartheta S_N}\right]={\displaystyle \prod_{i=1}^N M_X\left(\omega_i(N) ({\rm e}^{\vartheta+\vartheta^\star} -1)\right)}\left/{\displaystyle
\prod_{i=1}^N M_X\left(\omega_i(N) ({\rm e}^{\vartheta^\star} -1)\right)}\right..
\end{equation}
As a consequence,
$q_N(a) = {\mathbb E}_{\mathbb Q} LI_N,$
with the indicator function $I_N := 1_{\{S_N= Na\}}$ and the likelihood ratio 
\[L := {\rm e}^{-\vartheta^\star S_N} 
\prod_{i=1}^N M_X\left(\omega_i(N) ({\rm e}^{\vartheta^\star} -1)\right).\]
It thus follows that
\[q_N(a) = {\rm e}^{-\vartheta^\star Na} \left(\prod_{i=1}^N M_X\left(\omega_i(N) ({\rm e}^{\vartheta^\star} -1)\right) \right){\mathbb Q}(S_N=Na).\]
 We now point out how to evaluate the middle factor in the previous display (i.e., the product), namely, we check that asymptotically this middle factor behaves as
\begin{equation}\label{integ} \exp\left(N\int_0^1 \Lambda_X\big(\tau \overline F(x)\big) {\rm d}x\right), \end{equation}
with $\tau := {\rm e}^{\vartheta^\star} -1$.
The logarithm of the middle factor is 
\begin{equation*}
\sum_{i=1}^N \Lambda_X\big(\tau \omega_i(N))\big) 
=\sum_{i=1}^N \Lambda_X\left(\tau {N}\int_{(i-1)/N}^{i/N} \overline{F}(x){\rm d}x\right),
\end{equation*}
where, by a Taylor expansion of $\overline F$,
\[{N}\int_{(i-1)/N}^{i/N} \overline{F}(x){\rm d}x=\overline F\left(\frac{i-1}{N}\right)+\frac{1}{2N}\, \overline F\,'\left(\frac{i-1}{N}\right)+O\left(\frac{1}{N^2}\right).\]

As a consequence, from a Taylor expansion of $\Lambda_X(\cdot)$ we have
\[\sum_{i=1}^N \Lambda_X\big(\,\tau \omega_i(N)\big) = 
\sum_{i=1}^N \Lambda_X\left(\tau \overline F\left(\frac{i-1}{N}\right)\right) +\frac{\tau}{2N}\sum_{i=1}^N\overline F\,'\left(\frac{i-1}{N}\right)
\Lambda_X'\left(\tau \overline F\left(\frac{i-1}{N}\right)\right)+O\left(\frac 1 N\right),\]
%
%
%
where, as $N\to\infty$,
\begin{eqnarray*}
\frac{\tau}{2N}\sum_{i=1}^N\overline F\,'\left(\frac{i-1}{N}\right)
\Lambda_X'\left(\tau \overline F\left(\frac{i-1}{N}\right)\right) &\to &\frac{\tau}{2}\int_0^1
\overline F\,'(x) \,\Lambda_X'\left(\tau \overline F(x)\right) {\rm d}x\\&=&\frac{1}{2} \left(\Lambda_X\big(\tau \overline F(1)\big)-\Lambda_X\big(\tau \overline F(0)\big)\right),
\end{eqnarray*}
provided that $\overline F(\cdot)$ is twice differentiable on $[0,1]$ 
(recognize the {\it left Riemann sum} approximation).
Now recall the {\it trapezoidal rule} version of the Riemann sum approximation, that holds for any Riemann-integrable $G(\cdot)$:
\[\frac{1}{N}\sum_{i=1}^N G(i/N) = \int_0^1 G(x){\rm d}x +\frac{1}{2N}(G(1)-G(0)) + O\left(\frac{1}{N^2}\right).\]
Since $\Lambda_X$ is Riemann integrable on $[0,1]$, this can be applied to yield
\begin{eqnarray*}N\int_0^1 \Lambda_X\big(\tau \overline F(x)\big) {\rm d}x &=&\sum_{i=1}^N \Lambda_X\left(\,\tau \overline F\left(\frac{i}{N}\right)\right)-\frac{1}{2} \left(\Lambda_X\big(\tau \overline F(1)\big)-\Lambda_X\big(\tau \overline F(0)\big)\right) +O\left(\frac{1}{N}\right) \\
&=& \sum_{i=1}^N \Lambda_X\left(\,\tau \overline F\left(\frac{i-1}{N}\right)\right)+\frac{1}{2} \left(\Lambda_X\big(\tau \overline F(1)\big)-\Lambda_X\big(\tau \overline F(0)\big)\right) +O\left(\frac{1}{N}\right) .\end{eqnarray*}
We have thus arrived at
\[q_N(a) \sim  {\rm e}^{-\vartheta^\star Na}\exp\left(N\int_0^1 \Lambda_X\big(\,\overline F(x)(e^{\vartheta^\star}-1)\big) {\rm d}x \right) 
\,{\mathbb Q}(S_N=Na).\]
We are left to evaluate ${\mathbb Q}(S_N=Na)$. We do so by first proving the claim that, under ${\mathbb Q}$, $S_N$ obeys a central limit theorem: as $N\to\infty$, 
\[\frac{S_N-Na}{\sqrt{N}}\]
converges to a zero-mean Normal random variable. Recall from (\ref{MGF:Q}) that we have
\[\log {\mathbb E}_{\mathbb Q} \,{\rm e}^{\vartheta S_N} = \sum_{i=1}^N \Lambda_X\left(\omega_i(N)({\rm e}^{\vartheta+\vartheta^\star}-1)\right)-
\sum_{i=1}^N \Lambda_X\left(\omega_i(N)({\rm e}^{\vartheta^\star}-1)\right).\]
In order to establish that $S_N$ satisfies the anticipated central limit theorem, we prove that
$\Psi_N(\vartheta):=\log {\mathbb E}_{\mathbb Q} \,{\rm e}^{\vartheta S_N/\sqrt{N}} -\vartheta a \sqrt{N} \to \frac{1}{2}\sigma^2\vartheta^2,$
for some $\sigma^2>0.$  This is done as follows. Observe that we can write the logarithmic moment generating function $\Psi_N(\vartheta)$ as
\[\sum_{i=1}^N \Lambda_X\left(\omega_i(N)\left({\rm e}^{\vartheta^\star}-1+\left({\rm e}^{\vartheta^\star}({\rm e}^{\vartheta/\sqrt{N}}-1)\right)\right)\right)-
\sum_{i=1}^N \Lambda_X\left(\omega_i(N)({\rm e}^{\vartheta^\star}-1)\right)-\vartheta a\sqrt{N}.\]
 By applying a Taylor expansion to ${\rm e}^{\vartheta/\sqrt{N}}-1$, this can be written as (neglecting higher order terms)
\[\sum_{i=1}^N \Lambda_X\left(\omega_i(N)\left({\rm e}^{\vartheta^\star}-1+\left({\rm e}^{\vartheta^\star}\left(\frac{\vartheta}{\sqrt{N}}
+\frac{\vartheta^2}{2{N}}
\right)\right)\right)\right)-
\sum_{i=1}^N \Lambda_X\left(\omega_i(N)({\rm e}^{\vartheta^\star}-1)\right)
-\vartheta a\sqrt{N}.\]
This can be expanded to, up to terms that are $o(1)$ as $N\to\infty$,
\begin{equation}\label{expr}
\begin{aligned}
\lefteqn{\sum_{i=1}^N\left[ \Lambda_X'\left(\omega_i(N)({\rm e}^{\vartheta^\star}-1)\right)
\omega_i(N){\rm e}^{\vartheta^\star}\left(\frac{\vartheta}{\sqrt{N}}
+\frac{\vartheta^2}{2{N}}\right)\right.}\\
&&+\left.\frac{1}{2} \Lambda_X''\left(\omega_i(N)({\rm e}^{\vartheta^\star}-1)\right)
\omega_i(N)^2{\rm e}^{2\vartheta^\star}\frac{\vartheta^2}{N}\right]-\vartheta a\sqrt{N}.
\end{aligned}
\end{equation}
Now note that, similar to what we have seen before,
\[\frac{1}{N} \sum_{i=1}^N
 \Lambda_X'\left(\omega_i(N)({\rm e}^{\vartheta^\star}-1)\right)\omega_i(N)\,{\rm e}^{\vartheta^\star} = \int_0^1 \Lambda_X'\big(\,\overline F\left(x\right)({\rm e}^{\vartheta^\star}-1)\big)\overline F(x)\,{\rm e}^{\vartheta^\star}  {\rm d}x +O\left(\frac{1}{N}\right),\]
 where the integral equals $a$ by the definition of $\vartheta^\star$. 
We conclude that (\ref{expr}) converges to $ \frac{1}{2}\sigma^2\vartheta^2$ as $N\to\infty$, where the corresponding variance is given by
  \begin{eqnarray*}
  \sigma^2&:=& \int_0^1 \Lambda_X'\big(\,\overline F(x)({\rm e}^{\vartheta^\star}-1)\big)\overline F(x)\,{\rm e}^{\vartheta^\star}{\rm d}x
 + \int_0^1\Lambda_X''\big(\,\overline F(x)({\rm e}^{\vartheta^\star}-1)\big)
\overline F\,^2(x)\,{\rm e}^{2\vartheta^\star}{\rm d}x\\
&=&a
 + \int_0^1\Lambda_X''\big(\,\overline F(x)({\rm e}^{\vartheta^\star}-1)\big)
\overline F\,^2(x)\,{\rm e}^{2\vartheta^\star}{\rm d}x.
\end{eqnarray*}
We have thus established that, under ${\mathbb Q}$, $S_N$ satisfies the claimed central limit theorem. 
It directly implies that, by applying the usual continuity correction idea, ${\mathbb Q}(S_N= Na)$ behaves inversely proportionally to $\sqrt{N}$ in the sense that
\[\sqrt{N}\,{\mathbb Q}(S_N= Na) \sim \sqrt{N} \,{\mathbb P}\left({\mathscr N}(0,\sigma^2) \in\left(-\frac{1}{2\sqrt{N}},\frac{1}{2\sqrt{N}}\right)\right)
\to\frac{1}{\sqrt{2\pi}\sigma}.\]
Upon combining the above, we conclude that the following asymptotic relationship holds.

\begin{proposition}\label{thm:6.1}As $N\to\infty$, if  $\overline F(\cdot)$ is twice differentiable on $[0,1]$, 
\[q_N(a) \sim \check q_N(a) :={\rm e}^{-\vartheta^\star Na}\exp\left(N\int_0^1 \Lambda_X\big(\,\overline F(x)({\rm e}^{\vartheta^\star}-1)\big) {\rm d}x \right)
\,\frac{1}{\sqrt{2\pi N}\sigma}.\]
\end{proposition}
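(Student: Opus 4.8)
The plan is to make rigorous the exponential change-of-measure argument already sketched before the statement, whose three ingredients are: a deterministic evaluation of the likelihood ratio, a Riemann-sum analysis of a product of moment generating functions, and a local limit theorem under the twisted measure. First I would fix $\vartheta^\star$ as in the definition above, introduce the $\vartheta^\star$-twisted measure $\mathbb Q$ via (\ref{MGF:Q}), and write $q_N(a) = \mathbb E_{\mathbb Q}[L\,I_N]$ with $I_N := 1_{\{S_N = Na\}}$ and $L$ the associated likelihood ratio. Since on $\{S_N = Na\}$ the ratio $L$ equals the deterministic constant ${\rm e}^{-\vartheta^\star Na}\prod_{i=1}^N M_X(\omega_i(N)\tau)$ with $\tau := {\rm e}^{\vartheta^\star}-1$, this yields $q_N(a) = {\rm e}^{-\vartheta^\star Na}\big(\prod_{i=1}^N M_X(\omega_i(N)\tau)\big)\,\mathbb Q(S_N = Na)$, and it remains to determine the product and the probability separately.

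For the product I would take logarithms and expand twice. Using that $\overline F$ is twice differentiable, $\omega_i(N) = \overline F((i-1)/N) + \tfrac1{2N}\overline F\,'((i-1)/N) + O(N^{-2})$; a first-order Taylor expansion of $\Lambda_X$ around $\tau\overline F((i-1)/N)$ then gives $\sum_{i=1}^N\Lambda_X(\tau\omega_i(N)) = \sum_{i=1}^N\Lambda_X(\tau\overline F((i-1)/N)) + \tfrac{\tau}{2N}\sum_{i=1}^N\overline F\,'((i-1)/N)\Lambda_X'(\tau\overline F((i-1)/N)) + O(1/N)$. The middle term is a left Riemann sum converging to $\tfrac{\tau}{2}\int_0^1\overline F\,'(x)\Lambda_X'(\tau\overline F(x)){\rm d}x = \tfrac12(\Lambda_X(\tau\overline F(1)) - \Lambda_X(\tau\overline F(0)))$ by the chain rule and the fundamental theorem of calculus, while for the leading sum I would invoke the trapezoidal-rule refinement of the Riemann-sum approximation, $N\int_0^1\Lambda_X(\tau\overline F(x)){\rm d}x = \sum_{i=1}^N\Lambda_X(\tau\overline F((i-1)/N)) + \tfrac12(\Lambda_X(\tau\overline F(1)) - \Lambda_X(\tau\overline F(0))) + O(1/N)$. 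Adding these, the boundary terms cancel, so the logarithm of the product is $N\int_0^1\Lambda_X(\tau\overline F(x)){\rm d}x + O(1/N)$, i.e.\ the product is asymptotically $\exp\big(N\int_0^1\Lambda_X(\tau\overline F(x)){\rm d}x\big)$.

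The remaining task is $\mathbb Q(S_N = Na)$. I would first prove the central limit theorem $(S_N - Na)/\sqrt N \Rightarrow {\mathscr N}(0,\sigma^2)$ under $\mathbb Q$ by showing $\Psi_N(\vartheta) := \log\mathbb E_{\mathbb Q}[{\rm e}^{\vartheta S_N/\sqrt N}] - \vartheta a\sqrt N \to \tfrac12\sigma^2\vartheta^2$ on a neighbourhood of $0$, with $\sigma^2 = a + \int_0^1\Lambda_X''(\tau\overline F(x))\overline F(x)^2{\rm e}^{2\vartheta^\star}{\rm d}x$: one expands the factorised cumulant generating function from (\ref{MGF:Q}), writes ${\rm e}^{\vartheta/\sqrt N}-1 = \vartheta/\sqrt N + \vartheta^2/(2N) + o(1/N)$, Taylor-expands $\Lambda_X$ around $\omega_i(N)\tau$, and uses $\tfrac1N\sum_{i=1}^N\Lambda_X'(\tau\omega_i(N))\omega_i(N){\rm e}^{\vartheta^\star} \to \int_0^1\Lambda_X'(\tau\overline F(x))\overline F(x){\rm e}^{\vartheta^\star}{\rm d}x = a$ (the first-order optimality condition defining $\vartheta^\star$), so that the $O(\sqrt N)$ contribution exactly cancels $\vartheta a\sqrt N$; convergence of moment generating functions on an open interval containing the origin gives the claimed weak convergence, and $\sigma^2 \ge a > 0$ ensures non-degeneracy. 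Finally, since $S_N$ is a sum of independent integer-valued summands and $Na\in\nn$, I would upgrade this to a local limit theorem $\sqrt N\,\mathbb Q(S_N = Na) \to 1/(\sqrt{2\pi}\,\sigma)$ by a Gnedenko-type argument: the characteristic function of $S_N - Na$ factorises, each factor is that of a span-$1$ lattice law and hence has modulus bounded away from $1$ uniformly for $t$ away from $2\pi\mathbb Z$, and splitting the Fourier inversion integral over $[-\pi,\pi]$ into a central window (which after rescaling yields the Gaussian density at $0$) and a uniformly exponentially small remainder gives the result. Combining the three displays yields $q_N(a)\sim\check q_N(a)$.

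The step I expect to be the main obstacle is this last one: the ``continuity correction'' used informally in the text, $\sqrt N\,\mathbb Q(S_N = Na)\sim\sqrt N\,\mathbb P({\mathscr N}(0,\sigma^2)\in(-1/(2\sqrt N),1/(2\sqrt N)))$, requires a genuine local limit theorem for a \emph{triangular array} of non-identically distributed lattice random variables whose laws additionally vary with $N$ (through $\omega_i(N)$ and the exponential twist), so one cannot simply cite a textbook i.i.d.\ local CLT but must establish uniform control of the characteristic functions together with a Lindeberg-type condition for this array. By contrast, the Taylor and Riemann-sum bookkeeping of the first two steps, though lengthy, is entirely routine once $\overline F$ is assumed twice differentiable.
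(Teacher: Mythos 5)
Your proposal follows essentially the same route as the paper's proof: the $\vartheta^\star$-twisted change of measure giving $q_N(a)={\rm e}^{-\vartheta^\star Na}\bigl(\prod_{i=1}^N M_X(\omega_i(N)\tau)\bigr)\,{\mathbb Q}(S_N=Na)$, the Taylor/trapezoidal-rule evaluation of the product of moment generating functions, the central limit theorem for $S_N$ under ${\mathbb Q}$ via convergence of $\Psi_N(\vartheta)$ to $\tfrac12\sigma^2\vartheta^2$, and the local-limit/continuity-correction step. If anything you are more careful than the paper at the final step, where the text merely invokes ``the usual continuity correction idea'' while you correctly observe that a genuine local limit theorem for a triangular array of independent, non-identically distributed lattice summands is required and sketch the characteristic-function argument that supplies it.
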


Similar to Remark \ref{REM1}, we can convert the asymptotics of $q_N(a)$ into those of $Q_N(a)$. More precisely, it can be argued that $Q_N(a)$ has the same asymptotics as $q_N(a)$, except that the expansion for $q_N(a)$ should be divided by $1-{\rm e}^{-\vartheta^\star}$ (which is smaller than 1). Note also that for the case $\overline F(\cdot)\equiv 1$ we indeed recover the expression that we provided in Section \ref{sec:BR}. Furthermore, it is easily verified that if $\pp(X_i=\lambda)=1$ (so the arrival rates are deterministic), the approximation we obtained in Prop.\ \ref{thm:6.1} coincides with that of the transient distribution of an M/G/$\infty$ queue. With $\varrho(1) :=\lambda \int_0^1 \overline F(x){\rm d}x,$ recall that the number of customers present at time $1$ is Poisson with mean $\varrho(1)$. By applying Stirling's approximation, and using that $\vartheta\s = \log(a/\varrho(1)),$
\[
q_N(a)= \left({N}\varrho(1)\right)^{Na} {\rm e}^{-N\varrho(1)}  \frac{1}{(Na)!}
\sim \left(\frac{\varrho(1) }{a}\right)^{Na} {\rm e}^{N\left(a-\varrho(1)\right)} \frac{1}{\sqrt{2\pi Na}}=\check q_N(a).
\]

\subsection{Numerical example}

We consider the following numerical example, which illustrates how Prop.\ \ref{thm:6.1} can be useful in devising staffing rules with possible applications in cloud provisioning, call center staffing or the design of data centers.
Per time slot of length $1$ time unit (which we refer to as $\Delta$) a new arrival rate is sampled from a given distribution with a mean such that on average $\lambda$ clients arrive in the time slot of length $\Delta$. 
The service times have a fixed mean $E$. 

Let us assume the system starts empty, say at 8 {\sc am}. 
Suppose we wish to determine an appropriate staffing rule for slot 100 (evidently, any other slot for which we wish to adapt staffing levels can be dealt with analogously).
Then we choose $N=100$ (recall the way we normalized time), and after scaling we have $\ee[NX_i\Delta]=\lambda$ (as $N\Delta = 1$). 
Suppose the service facility wishes to maintain a rather strict quality level; its objective is to choose the number of servers in slot 100 to be $\lfloor Na\rfloor$ (or, alternatively, $\lceil Na\rceil$), where $a$ is the smallest number such that $Q_{N}(a)$ drops below $\varepsilon$.

For the service times we consider the following three distributions: 
\begin{itemize}
\item[$\circ$]
In the first place, we assume that the service times are exponential with mean service time $E$, that is, $\overline F(x) = e^{- x/E}$.
\item[$\circ$]
A second choice is to assume that the service times are deterministically equal to $E$, that is we define $\overline F(x)=\mathbbm{1}\{x< E\}$.
\item[$\circ$]
A third choice is to assume that the service times have a Pareto(2) distribution with mean $E$, that is, $\overline F(x) = (1+x/E)^{-2}$.
\end{itemize}

As indicated in the introduction, in practice arrival rates for modeling call centers are typically not constant over time, but may be fluctuating around some mean value \cite{jongbloed2001managing}. We assume that arrival rates follow a Poisson distribution in Section \ref{sec:AboutQ:PoisX}. In Section \ref{sec:AboutQ:burstyX} we consider discrete arrival rates alternating between two values (corresponding to busy and quiet periods), motivated by applications in cloud computing, where the workload of virtual machines exhibits such bursty behaviour \cite{zhang2016burstiness}.

\subsubsection{Poisson arrival rates}\label{sec:AboutQ:PoisX}

In this example we take $X_i\sim \Pois(\lambda)$. We then have
\[\Lambda_X(\vartheta) = \lambda\left(e^\vartheta-1\right);\quad \Lambda_X'(\vartheta)=\Lambda_X''(\vartheta)=\lambda \,e^\vartheta. \]
To compute $\vartheta\s$ and $\sigma^2$, we evaluate
\[ \int_0^1 \Lambda_X\left(\,\overline F(x)(e^{\vartheta}-1)\right) {\rm d}x = \int_0^1 \lambda\,\left(\exp\left(\overline F(x)(e^{\vartheta}-1)\right)-1\right){\rm d}x
\]
and 
\[\int_0^1 \lambda\,\exp\left(\overline F(x)(e^{\vartheta^\star}-1)\right)
\overline F\,^2(x)\,{\rm e}^{2\vartheta^\star}{\rm d}x\]
by numerical integration. Inserting the resulting quantities into the formula provided in Prop.~\ref{thm:6.1}, we can compute the approximation $\check Q_N(a)$ as $\check q_N(a)(1-e^{-\vartheta\s})^{-1}$ for various $a$. Consider Fig.~\ref{fig:Q}  for a comparison of $\check Q_N(a)$ with the corresponding estimators $\hat Q_N(a)$ that are obtained by crude Monte Carlo estimation of the probability $Q_N(a)$ as defined in (\ref{def:Q}).

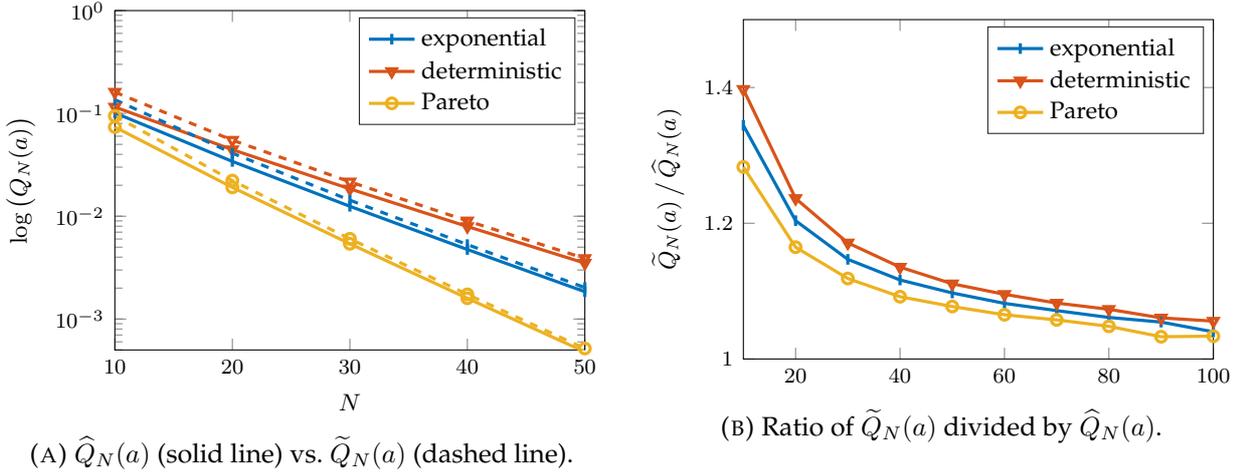
\begin{figure}[h!]
\centering
\begin{subfigure}{.49\linewidth}\centering
%
%
\definecolor{mycolor1}{rgb}{0.00000,0.44700,0.74100}%
\definecolor{mycolor2}{rgb}{0.85000,0.32500,0.09800}%
\definecolor{mycolor3}{rgb}{0.92900,0.69400,0.12500}%
\begin{tikzpicture}

\begin{axis}[%
width=0.951\figurewidth,
height=\figureheight,
at={(0\figurewidth,0\figureheight)},
scale only axis,
xmin=10,
xmax=50,
xlabel={$N$},
ymode=log,
ymin=5e-04,
ymax=1,
yminorticks=true,
ylabel={$\log\big(Q_N(a)\big)$},
axis background/.style={fill=white},
title style={font=\bfseries},
title={},
legend style={legend cell align=left,align=left,draw=white!15!black}
]
\addplot [color=mycolor1,solid,mark=|,mark options={solid}]
  table[row sep=crcr]{%
10	0.101158186\\
20	0.034111992\\
30	0.01248641\\
40	0.004744078\\
50	0.001843836\\
60	0.00072905\\
70	0.000291146\\
80	0.000117416\\
90	4.7586e-05\\
100	1.9546e-05\\
110	8.034e-06\\
120	3.238e-06\\
130	1.356e-06\\
140	5.46e-07\\
150	2.78e-07\\
};
\addlegendentry{exponential};

\addplot [color=mycolor2,solid,mark=triangle,mark options={solid,rotate=180}]
  table[row sep=crcr]{%
10	0.115338664\\
20	0.044407568\\
30	0.018449648\\
40	0.007941508\\
50	0.003498384\\
60	0.001560734\\
70	0.000704582\\
80	0.000320302\\
90	0.000147256\\
100	6.7618e-05\\
110	3.0816e-05\\
120	1.4388e-05\\
130	6.466e-06\\
140	3.14e-06\\
150	1.478e-06\\
};
\addlegendentry{deterministic};

\addplot [color=mycolor3,solid,mark=o,mark options={solid}]
  table[row sep=crcr]{%
10	0.073626586\\
20	0.019068136\\
30	0.005390578\\
40	0.001590388\\
50	0.000479348\\
60	0.000147162\\
70	4.5628e-05\\
80	1.432e-05\\
90	4.556e-06\\
100	1.436e-06\\
110	4.76e-07\\
120	1.46e-07\\
130	4.8e-08\\
140	1e-08\\
150	0\\
};
\addlegendentry{Pareto};

\addplot [color=mycolor1,dashed,forget plot,mark=|,mark options={solid}]
  table[row sep=crcr]{%
10	0.135974445326379\\
20	0.0410636663890204\\
30	0.014319488938802\\
40	0.00529631216957505\\
50	0.00202317753155426\\
60	0.000788785275981375\\
70	0.00031188955105709\\
80	0.000124600641032884\\
90	5.01717720360492e-05\\
100	2.03280687071619e-05\\
110	8.27780520419602e-06\\
120	3.38482598740596e-06\\
130	1.38889917021721e-06\\
140	5.7160208155132e-07\\
150	2.3584551121881e-07\\
};
\addplot [color=mycolor2,dashed,forget plot,mark=triangle,mark options={solid,rotate=180}]
  table[row sep=crcr]{%
10	0.161161410329637\\
20	0.0549132719252726\\
30	0.0216054318577292\\
40	0.00901622712004458\\
50	0.00388598479037055\\
60	0.00170939226036417\\
70	0.00076260491203345\\
80	0.000343743808681189\\
90	0.000156167221422136\\
100	7.13908245194872e-05\\
110	3.28002621753013e-05\\
120	1.51326258527073e-05\\
130	7.005909148923e-06\\
140	3.25314498607554e-06\\
150	1.51444347185868e-06\\
};
\addplot [color=mycolor3,dashed,forget plot,mark=o,mark options={solid}]
  table[row sep=crcr]{%
10	0.0944557560539271\\
20	0.0222086705691918\\
30	0.00602956614590235\\
40	0.00173630424424244\\
50	0.000516392530463864\\
60	0.0001567467193279\\
70	4.8254097390055e-05\\
80	1.50088478218497e-05\\
90	4.70522314113267e-06\\
100	1.48426214184722e-06\\
110	4.70569092717298e-07\\
120	1.49809118955958e-07\\
130	4.78592984372018e-08\\
140	1.53349766020519e-08\\
150	4.92618409034436e-09\\
};
\end{axis}
\end{tikzpicture}%
\caption{$\hat Q_N(a)$ (solid line) vs.\ $\check Q_N(a)$ (dashed line).}
 \end{subfigure}
 \hfill
 \begin{subfigure}{.49\linewidth}\centering
%
%
\definecolor{mycolor1}{rgb}{0.00000,0.44700,0.74100}%
\definecolor{mycolor2}{rgb}{0.85000,0.32500,0.09800}%
\definecolor{mycolor3}{rgb}{0.92900,0.69400,0.12500}%
\begin{tikzpicture}

\begin{axis}[%
width=0.951\figurewidth,
height=\figureheight,
at={(0\figurewidth,0\figureheight)},
scale only axis,
unbounded coords=jump,
xmin=10,
xmax=100,
ymin=1,
ymax=1.5,
ylabel={$\check Q_N(a)\,\big/\,\hat Q_N(a)$},
axis background/.style={fill=white},
legend style={legend cell align=left,align=left,draw=white!15!black}
]
\addplot [color=mycolor1,solid,mark=|,mark options={solid}]
  table[row sep=crcr]{%
10	1.34417639049378\\
20	1.20378975197404\\
30	1.14680592250311\\
40	1.11640495151535\\
50	1.09726544635979\\
60	1.08193577392686\\
70	1.07124793422232\\
80	1.06118962520342\\
90	1.05433892397027\\
100	1.04001170097011\\
110	1.03034667714663\\
120	1.04534465330635\\
130	1.02426192493894\\
140	1.0468902592515\\
150	0.8483651482691\\
};
\addlegendentry{exponential};

\addplot [color=mycolor2,solid,mark=triangle,mark options={solid,rotate=180}]
  table[row sep=crcr]{%
10	1.3972886865556\\
20	1.23657462901982\\
30	1.17104845890443\\
40	1.13532935055213\\
50	1.11079423824559\\
60	1.09524894079591\\
70	1.08235082933349\\
80	1.07318658229168\\
90	1.0605151669347\\
100	1.0557961566371\\
110	1.06439064691398\\
120	1.05175325637387\\
130	1.08349971372147\\
140	1.0360334350559\\
150	1.02465728813172\\
};
\addlegendentry{deterministic};

\addplot [color=mycolor3,solid,mark=o,mark options={solid}]
  table[row sep=crcr]{%
10	1.28290283694435\\
20	1.1647006592145\\
30	1.11853796492739\\
40	1.0917488337704\\
50	1.07728107859815\\
60	1.06513039594392\\
70	1.05755451455367\\
80	1.04810389817386\\
90	1.03275310384826\\
100	1.0336087338769\\
110	0.988590530918693\\
120	1.02608985586273\\
130	0.997068717441704\\
140	1.53349766020519\\
150	inf\\
};
\addlegendentry{Pareto};

\end{axis}
\end{tikzpicture}%
\caption{Ratio of $\check Q_N(a)$ divided by $\hat Q_N(a)$.}
 \end{subfigure}
 \caption{Comparison of crude Monte Carlo estimators $\hat Q_N(a)$  and the approximation $\check Q_N(a)$ as provided in Prop.\ \ref{thm:6.1}. Parameters are chosen as $a=0.2$, $\lambda=0.1$, $E=1$.}
  \label{fig:Q}
 \end{figure}

We then proceed to find the value of $a$, denoted by $a(\ve)$, for which we have $|\check Q_N(a)-\ve|<10^{-9}$ using a bisection method. The results are displayed in Table \ref{TaQ}; together with $M_1$,  the expected number of customers present at time $1$; the Monte Carlo estimates $\hat Q_N\big(a(\ve)\big)$; and the values of $\check Q_N(\underline a)$ and $\check Q_N(\overline a)$, where $\underline a$ and $\overline a$ are such that the number of servers is integer-valued: $N\underline a=\lfloor N a(\ve)\rfloor$ and $N\overline a=\lceil N a(\ve)\rceil$. Surprisingly, the results we obtain for $a(\ve)$ and $M_1$ suggest that the number of servers required decreases as the variability of the service distribution increases: a relatively small number of servers suffices when service times are Pareto(2), whereas a large number of servers is required for deterministic service times.

\begin{table}[h!]
\caption{
\label{TaQ}\small Values of $a(\ve)$ needed to achieve $|\check Q_N\big(a(\ve)\big)- \ve|<10^{-9}$ with $N=100$, expected arrival rate $\lambda=2$, and mean service time $E$.  The Monte Carlo estimates $\hat Q_N\big(a(\ve)\big)$ are also provided (based on $10^9$ runs) together with $\mbox{\sc ci}$, the width of the standard normal 95\% confidence interval, as well as the values of the approximation $\check Q_N(a)$ with $\underline{\smash{ a}}$ ($\overline a$, respectively) such that $N \underline{\smash{a}}=\lfloor N a\rfloor$ ($N \overline a=\lceil N a\rceil$, respectively). The inferred number of servers is $N\overline a$, which should be larger than the expected number of customers $M_1$ at time $1$.}
\begin{tabular}{>{\small}C{0.8cm}>{\small}C{0.8cm}>{\small}C{0.6cm}>{\small}C{1.1cm}>{\small}C{0.6cm}>{\small}C{0.6cm}>{\small}C{3.2cm}>{\small}C{3cm}}
\midrule
$F$ & $\ve$ & $E$  & $a\left(\ve\right)$ & $N\overline a$ & $\lceil M_1\rceil$ & $\frac 1 \ve \left[\hat Q_N\left(a\left(\ve\right)\right)\pm \frac{ \mbox{\sc ci}}{2}\right]$ & $\frac 1 \ve \left(\check Q_N(\underline a),\check Q_N(\overline a)\right)$\\
\midrule
\multirow{6}{*}{\rotatebox[origin=c]{90}{\parbox[c]{2.3cm}{\centering Exponential}}} 
& \multirow{3}{*}{$10^{-3}$} 
& $0.05$ & $0.2516$ & $26$ & $10$ & $0.5568\pm 0.0015$ & $(1.1009,0.6033)$\\
&& $0.5$ & $1.2602$ & $127$ & $87$  & $0.7215\pm 0.0017$ & $(1.0053,\,0.7802)$ \\
&& $1$ & $1.7537$ & $176$ & $127$ & $0.8099\pm 0.0018$ & $(1.0784,\,0.8780)$\\
\cmidrule{2-8}
& \multirow{3}{*}{$10^{-4}$} 
& $0.05$ & $0.2885$ & $29$ & $10$ & $0.8436\pm 0.0057$ & $(1.7277,0.9039)$\\
&& $0.5$ & $1.3460$ & $135$ & $87$ & $0.8380\pm 0.0057$ & $(1.1858
,0.8921)$ \\
&& $1$&  $1.8587$ & $186$ & $127$ & $0.9122\pm 0.0059$ & $(1.2238,0.9702)$ \\
\midrule
\multirow{6}{*}{\rotatebox[origin=c]{90}{\parbox[c]{2.3cm}{\centering Deterministic}}} 
& \multirow{3}{*}{$10^{-3}$} 
& $0.05$ & $0.2782$ & $28$ & $10$ & $0.8382\pm 0.0018$ & $(1.4983,0.9133)$ \\
&& $0.5$ &$1.4809$ & $149$ & $100$ & $0.7645\pm 0.0017$ & $(1.0185,\,0.8279)$  \\
&& $1$ & $2.6636$ & $267$ & $200$ & $0.8353\pm 0.0018$ & $(1.0565,\,0.9070)$\\
\cmidrule{2-8}
& \multirow{3}{*}{$10^{-4}$}
& $0.05$ & $0.3223$ & $33$ & $10$ & $0.6146\pm 0.0049$ & $(1.1319,0.6547)$  \\
&& $0.5$ & $1.5857$ & $159$ & $100$ & $0.8463\pm 0.0057$ & $(1.1407,0.9036)$  \\
&& $1$ & $2.8048$ & $281$ & $200$ & $0.8590\pm 0.0057$ & $(1.0869,0.9136)$ \\
\midrule
\multirow{6}{*}{\rotatebox[origin=c]{90}{\parbox[c]{2.3cm}{\centering Pareto(2)}}} 
& \multirow{3}{*}{$10^{-3}$} 
& $0.05$ & $0.2350$ & $24$ & $10$ & $0.6630\pm 0.0016$ & $(1.3845,0.7229)$ \\
&& $0.5$ & $1.0074$ & $101$ & $67$ & $0.8559\pm 0.0018$ & $(1.2375,\,0.9268)$\\
&& $1$ & $1.4250$ & $143$ & $100$ & $0.8224\pm 0.0018$ & $(1.1252,\,0.8894)$\\
\cmidrule{2-8}
& \multirow{3}{*}{$10^{-4}$} 
& $0.05$ & $0.2688$ & $27$ & $10$ & $0.5721\pm 0.0057$ & $(1.8616,0.9194)$\\
&& $0.5$ & $1.0818$ & $109$ & $67$ & $0.7223\pm 0.0053$ & $(1.0613,0.7633)$\\
&& $1$ & $1.5167$ & $152$ & $100$ & $0.8642\pm 0.0058$ & $(1.1959,0.9164)$\\
\midrule
\end{tabular}
\end{table}

At first sight, this outcome may seem counter-intuitive: one would perhaps have expected that unsteady service times would imply that more servers are needed. It is, however, easy to see that this conclusion is not necessarily valid (and in fact false for the example at hand). While it is true that customers arriving at an early slot can be served in time by the `deterministic servers' with probability $1$, customers arriving in later slots can never complete their service in time. For `random servers' instead, customers arriving early may not finish their service in time but on the other hand customers arriving late still have a chance of completing their service.

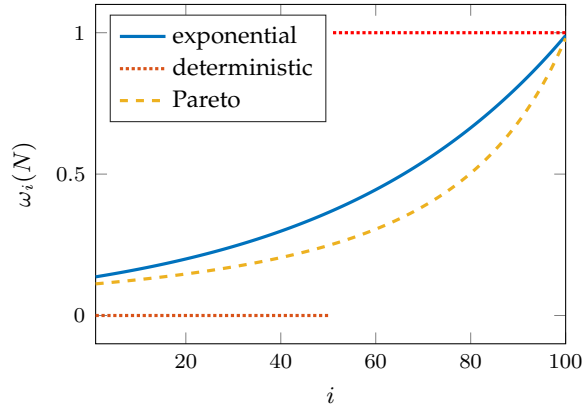
\begin{figure}[h!]
%
%
\definecolor{mycolor1}{rgb}{0.00000,0.44700,0.74100}%
\definecolor{mycolor2}{rgb}{0.85000,0.32500,0.09800}%
\definecolor{mycolor3}{rgb}{0.92900,0.69400,0.12500}%
\begin{tikzpicture}

\begin{axis}[%
width=0.951\figurewidth,
height=\figureheight,
at={(0\figurewidth,0\figureheight)},
scale only axis,
xmin=1,
xmax=100,
xlabel={$i$},
ymin=-0.1,
ymax=1.1,
ylabel={$\omega_i(N)$},
axis background/.style={fill=white},
legend style={at={(0.03,0.97)},anchor=north west,legend cell align=left,align=left,draw=white!15!black}
]
\addplot [color=mycolor1,solid]
  table[row sep=crcr]{%
1	0.136697703714006\\
2	0.139459180507609\\
3	0.142276442832896\\
4	0.145150617632361\\
5	0.148082854614246\\
6	0.151074326712441\\
7	0.154126230555672\\
8	0.157239786946169\\
9	0.160416241348003\\
10	0.163656864385288\\
11	0.166962952350448\\
12	0.170335827722751\\
13	0.173776839697317\\
14	0.177287364724817\\
15	0.180868807062071\\
16	0.184522599333765\\
17	0.188250203105528\\
18	0.192053109468568\\
19	0.195932839636139\\
20	0.199890945552036\\
21	0.203929010511402\\
22	0.208048649794063\\
23	0.212251511310661\\
24	0.21653927626184\\
25	0.220913659810753\\
26	0.225376411769145\\
27	0.229929317297304\\
28	0.234574197618146\\
29	0.239312910745734\\
30	0.244147352228501\\
31	0.249079455907501\\
32	0.254111194689968\\
33	0.259244581338505\\
34	0.264481669276218\\
35	0.269824553408111\\
36	0.275275370959076\\
37	0.280836302328812\\
38	0.286509571964013\\
39	0.292297449248178\\
40	0.298202249409394\\
41	0.304226334446456\\
42	0.310372114073703\\
43	0.316642046684929\\
44	0.323038640336779\\
45	0.329564453752003\\
46	0.336222097342979\\
47	0.343014234255914\\
48	0.349943581436135\\
49	0.357012910714907\\
50	0.364225049918201\\
51	0.37158288399786\\
52	0.379089356185624\\
53	0.386747469170453\\
54	0.394560286299653\\
55	0.402530932804246\\
56	0.410662597049112\\
57	0.418958531808372\\
58	0.427422055566543\\
59	0.436056553845978\\
60	0.444865480561116\\
61	0.453852359400097\\
62	0.463020785234284\\
63	0.472374425556257\\
64	0.481917021946862\\
65	0.491652391571892\\
66	0.501584428709004\\
67	0.511717106305481\\
68	0.522054477567465\\
69	0.532600677581296\\
70	0.543359924967596\\
71	0.554336523568779\\
72	0.565534864170642\\
73	0.576959426258738\\
74	0.588614779810233\\
75	0.600505587121953\\
76	0.612636604675371\\
77	0.625012685039255\\
78	0.637638778810768\\
79	0.65051993659577\\
80	0.663661311029125\\
81	0.677068158835827\\
82	0.690745842933759\\
83	0.704699834578932\\
84	0.718935715554059\\
85	0.733459180401346\\
86	0.748276038700379\\
87	0.763392217392039\\
88	0.778813763149355\\
89	0.794546844796252\\
90	0.810597755775167\\
91	0.826972916664507\\
92	0.843678877746964\\
93	0.860722321629723\\
94	0.878110065917583\\
95	0.895849065940101\\
96	0.913946417533809\\
97	0.932409359880645\\
98	0.951245278403723\\
99	0.970461707721603\\
100	0.990066334662233\\
};
\addlegendentry{exponential};

\addplot [color=mycolor2,solid,densely dotted]
  table[row sep=crcr]{%
1	0\\
2	0\\
3	0\\
4	0\\
5	0\\
6	0\\
7	0\\
8	0\\
9	0\\
10	0\\
11	0\\
12	0\\
13	0\\
14	0\\
15	0\\
16	0\\
17	0\\
18	0\\
19	0\\
20	0\\
21	0\\
22	0\\
23	0\\
24	0\\
25	0\\
26	0\\
27	0\\
28	0\\
29	0\\
30	0\\
31	0\\
32	0\\
33	0\\
34	0\\
35	0\\
36	0\\
37	0\\
38	0\\
39	0\\
40	0\\
41	0\\
42	0\\
43	0\\
44	0\\
45	0\\
46	0\\
47	0\\
48	0\\
49	0\\
50	0\\
};
\addlegendentry{deterministic};

\addplot [color=red,densely dotted,forget plot]
  table[row sep=crcr]{%
51	1\\
52	1\\
53	1\\
54	1\\
55	1\\
56	1\\
57	1\\
58	1\\
59	1\\
60	1\\
61	1\\
62	1\\
63	1\\
64	1\\
65	1\\
66	1\\
67	1\\
68	1\\
69	1\\
70	1\\
71	1\\
72	1\\
73	1\\
74	1\\
75	1\\
76	1\\
77	1\\
78	1\\
79	1\\
80	1\\
81	1\\
82	1\\
83	1\\
84	1\\
85	1\\
86	1\\
87	1\\
88	1\\
89	1\\
90	1\\
91	1\\
92	1\\
93	1\\
94	1\\
95	1\\
96	1\\
97	1\\
98	1\\
99	1\\
100	1\\
};

\addplot [color=mycolor3,solid,dashed]
  table[row sep=crcr]{%
1	0.111856823266219\\
2	0.113368401959006\\
3	0.114910829196543\\
4	0.116484950144441\\
5	0.118091639111951\\
6	0.119731800766284\\
7	0.121406371406371\\
8	0.123116320299419\\
9	0.124862651083808\\
10	0.126646403242148\\
11	0.12846865364851\\
12	0.13033051819414\\
13	0.132233153496245\\
14	0.134177758694719\\
15	0.136165577342048\\
16	0.138197899391929\\
17	0.14027606329256\\
18	0.142401458190932\\
19	0.144575526254916\\
20	0.146799765120376\\
21	0.149075730471079\\
22	0.15140503875969\\
23	0.15378937007874\\
24	0.156230471191101\\
25	0.158730158730159\\
26	0.161290322580645\\
27	0.163912929451875\\
28	0.166600026656004\\
29	0.169353746104864\\
30	0.172176308539945\\
31	0.175070028011204\\
32	0.178037316621564\\
33	0.181080689555266\\
34	0.184202770409667\\
35	0.187406296851574\\
36	0.1906941266209\\
37	0.194069243906226\\
38	0.197534766118837\\
39	0.201093951093951\\
40	0.204750204750205\\
41	0.208507089241034\\
42	0.212368331634387\\
43	0.216337833160263\\
44	0.220419679068947\\
45	0.224618149146451\\
46	0.228937728937729\\
47	0.233383121732636\\
48	0.237959261374453\\
49	0.242671325956125\\
50	0.247524752475248\\
51	0.252525252525253\\
52	0.257678829107401\\
53	0.262991794656007\\
54	0.268470790378007\\
55	0.274122807017544\\
56	0.279955207166853\\
57	0.285975749256463\\
58	0.292192613370734\\
59	0.298614429049212\\
60	0.305250305250305\\
61	0.31210986267166\\
62	0.319203268641471\\
63	0.326541274817137\\
64	0.334135257952419\\
65	0.341997264021888\\
66	0.350140056022409\\
67	0.358577165806081\\
68	0.367322950337937\\
69	0.376392652815417\\
70	0.385802469135802\\
71	0.395569620253165\\
72	0.405712431028887\\
73	0.416250416250416\\
74	0.427204374572796\\
75	0.43859649122807\\
76	0.45045045045045\\
77	0.46279155868197\\
78	0.475646879756469\\
79	0.489045383411581\\
80	0.503018108651911\\
81	0.5175983436853\\
82	0.532821824381927\\
83	0.548726953467954\\
84	0.565355042966983\\
85	0.582750582750583\\
86	0.600961538461538\\
87	0.620039682539683\\
88	0.640040962621608\\
89	0.661025912215759\\
90	0.683060109289617\\
91	0.706214689265537\\
92	0.730566919929866\\
93	0.756200846944949\\
94	0.783208020050125\\
95	0.811688311688312\\
96	0.841750841750842\\
97	0.873515024458421\\
98	0.90711175616836\\
99	0.942684766214178\\
100	0.980392156862745\\
};
\addlegendentry{Pareto};

\end{axis}
\end{tikzpicture}%
\caption{Values of $\omega_i(N)$, the probability that a customer arriving in the $i$-th time slot is still in the system at time $1$, where $N=100$, $E=0.5$.}
\label{fig:omega}
\end{figure}  

In our example, this is reflected in the values of $\omega_i(N)$: bearing in mind that we fixed the value of the mean service time $E$, the arrival rates in the system with Pareto service times are thinned less in early slots but more in later slots, compared to deterministic service times (see Fig.~\ref{fig:omega}). 
That Pareto service times turn out to be better is a result of the fact that 
 the Pareto service times are smaller than $E$ with large probability, and hence the regime in which the Pareto servers outperform the deterministic servers matters more than the regime in which the deterministic servers are better.  
Formally, we have that the sum of $\omega_i(N)$ is smallest in the case of Pareto servers, and hence, $S_N=\sum_{i=1}^N \Pois\big(X_i\omega_i(N)\big)$ has the smallest exceedance probability in that case. 



To further investigate this issue, it is instructive to compute the variance of the steady-state number of clients in the system for the three models for the infinite-server queue. To this end, we can use the formulae that were provided in \cite[Eqn.\ (2.31)]{heemskerk2016scaling} for the special case of exponential service times, noting that they can analogously be derived for more general service time distributions. We obtain
\[\Var\left(\sum_{i=1}^N Z_i\right) = \Var X\, \sum_{i=1}^N \omega_i^2(N) +{\mathbb E}X\,\sum_{i=1}^N \omega_i(N).\]
In case the service times are typically considerably smaller than 1, this
behaves as 
\begin{equation}\label{VARE}N \,\Var X\int_0^1 \overline F\,^2(x){\rm d}x + N\,{\mathbb E}X \int_0^1 \overline F(x){\rm d}x\approx N \,\Var X\int_0^\infty \overline F\,^2(x){\rm d}x + N\,{\mathbb E}X \int_0^\infty \overline F(x){\rm d}x
.\end{equation}
In this decomposition the second part can be interpreted as the variance that one would obtain if the arrival process were Poisson with a constant (non-random) rate ${\mathbb E}X$, whereas the first part is the contribution due to overdispersion.
In our example, because $X$ has a Poisson distribution, ${\mathbb E}X=\lambda=\Var X$. 

The mean number in the system in stationarity is
\begin{align}\label{meanloadstat}
M_\infty:=N\,{\mathbb E}X\, \int_0^\infty \overline F(x){\rm d}x = N\, \lambda\, E,
\end{align}
which shows that this term depends on the service-time distribution only through its mean $E$.

It thus follows that the second term in the right-hand side of (\ref{VARE}) equals $N\, \lambda\, E$. 
We now consider the first (overdispersion-related) term.  
In the exponential case,
\[\int_0^\infty \overline F\,^2(x){\rm d}x = \int_0^\infty e^{-2x/E}{\rm d}x = \frac{E}{2};\]
in the deterministic case,
\[\int_0^\infty \overline F\,^2(x){\rm d}x = \int_0^E {\rm d}x = E;\]
and in the Pareto(2) case,
\[\int_0^\infty \overline F\,^2(x){\rm d}x = \int_0^\infty (1+x/E)^{-4} {\rm d}x = \frac{E}{3}.\]
These computations confirm that the variability in the number of clients in the system is highest when the service times are deterministic, and lowest when they are Pareto(2). This entails that -- as we saw from the results in Table \ref{TaQ} -- if there is overdispersion (i.e., $\Var X>0$), the Pareto(2) case allows for a relatively conservative staffing policy, whereas in the deterministic case comparatively many servers are required. 


The table also shows that the required number of servers given by $N\bar a$ is, for obvious reasons, larger than $M_1$, the expected number of customers at time 1. At the same time, $N\bar a$ can be substantially {\it lower} than the expected number of customers in the system in stationarity (i.e., $M_\infty$, as defined in (\ref{meanloadstat})),
due to the fact that the system has not necessarily reached stationarity at time $t=1$ (recall that the system starts empty at time $0$).

\subsubsection{Bursty arrival rate parameters}\label{sec:AboutQ:burstyX}

In a second example we assume that the arrivals are Poisson and usually occur with a certain rate $\lambda_1$, but occasionally occur with some larger rate $\lambda_2$ (corresponding to peak times in the network). Queueing networks with such `bursty' arrival behaviour are of interest in the context of cloud computing, see for example \cite{patch2016online,zhang2016burstiness}.

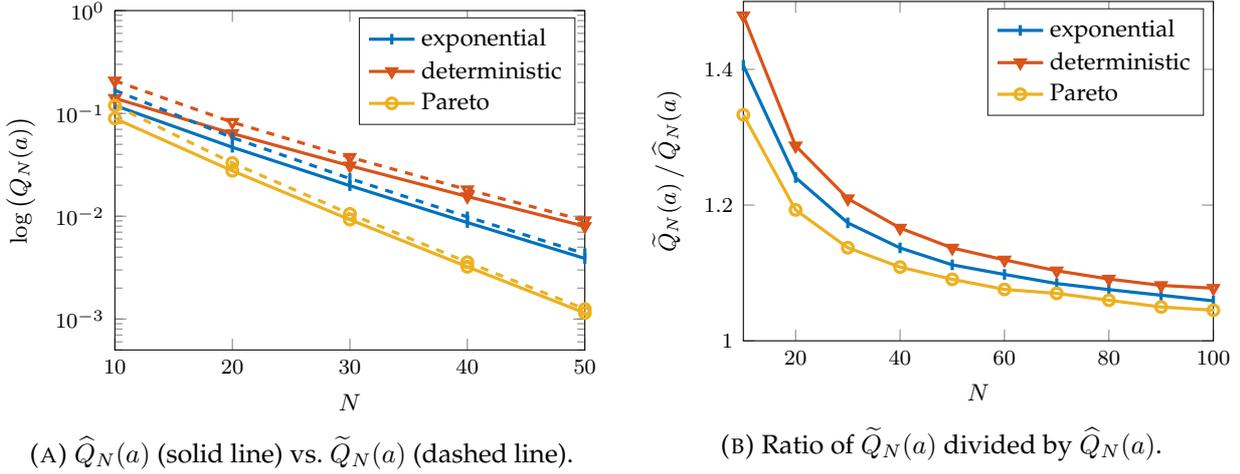
\begin{figure}
\centering
\begin{subfigure}{.49\linewidth}\centering
%
%
\definecolor{mycolor1}{rgb}{0.00000,0.44700,0.74100}%
\definecolor{mycolor2}{rgb}{0.85000,0.32500,0.09800}%
\definecolor{mycolor3}{rgb}{0.92900,0.69400,0.12500}%
\begin{tikzpicture}

\begin{axis}[%
width=0.951\figurewidth,
height=\figureheight,
at={(0\figurewidth,0\figureheight)},
scale only axis,
xmin=10,
xmax=50,
xlabel={$N$},
ymode=log,
ymin=5e-04,
ymax=1,
yminorticks=true,
ylabel={$\log\big(Q_N(a)\big)$},
axis background/.style={fill=white},
legend style={legend cell align=left,align=left,draw=white!15!black}
]
\addplot [color=mycolor1,solid,mark=|,mark options={solid}]
  table[row sep=crcr]{%
10	0.119385732\\
20	0.046848618\\
30	0.019798946\\
40	0.008669584\\
50	0.003882834\\
60	0.001758222\\
70	0.000807082\\
80	0.000372798\\
90	0.00017347\\
100	8.1222e-05\\
110	3.7622e-05\\
120	1.8264e-05\\
130	8.316e-06\\
140	4.116e-06\\
150	1.872e-06\\
};
\addlegendentry{exponential};

\addplot [color=mycolor2,solid,mark=triangle,mark options={solid,rotate=180}]
  table[row sep=crcr]{%
10	0.140288094\\
20	0.063720998\\
30	0.03096287\\
40	0.015550176\\
50	0.007977738\\
60	0.004135702\\
70	0.00217199\\
80	0.001148608\\
90	0.000610764\\
100	0.000325186\\
110	0.000175324\\
120	9.4038e-05\\
130	5.0614e-05\\
140	2.7908e-05\\
150	1.4856e-05\\
};
\addlegendentry{deterministic};

\addplot [color=mycolor3,solid,mark=o,mark options={solid}]
  table[row sep=crcr]{%
10	0.089412666\\
20	0.02764247\\
30	0.009263356\\
40	0.003219532\\
50	0.001145326\\
60	0.000414722\\
70	0.00015105\\
80	5.581e-05\\
90	2.0782e-05\\
100	7.554e-06\\
110	3.004e-06\\
120	1.178e-06\\
130	3.8e-07\\
140	1.46e-07\\
150	3.8e-08\\
};
\addlegendentry{Pareto};

\addplot [color=mycolor1,dashed,forget plot,mark=|,mark options={solid}]
  table[row sep=crcr]{%
10	0.167825624222813\\
20	0.0581176151944621\\
30	0.0232394939724966\\
40	0.00985647922155014\\
50	0.00431748975727622\\
60	0.00193021285935938\\
70	0.000875177858116835\\
80	0.00040092658959364\\
90	0.00018511990139079\\
100	8.60080055174007e-05\\
110	4.01612337518549e-05\\
120	1.88311688342245e-05\\
130	8.86055112387572e-06\\
140	4.18150774731278e-06\\
150	1.97840775538699e-06\\
};
\addplot [color=mycolor2,dashed,forget plot,mark=triangle,mark options={solid,rotate=180}]
  table[row sep=crcr]{%
10	0.207491910667676\\
20	0.0820343752176991\\
30	0.0374507007286526\\
40	0.0181342785563259\\
50	0.00906890763702378\\
60	0.00462886013683623\\
70	0.00239612994427911\\
80	0.00125321070451579\\
90	0.000660628852663997\\
100	0.000350419684994369\\
110	0.000186810746150897\\
120	0.000100003978351841\\
130	5.37212274471986e-05\\
140	2.89443158137461e-05\\
150	1.5634765768108e-05\\
};
\addplot [color=mycolor3,dashed,forget plot,mark=o,mark options={solid}]
  table[row sep=crcr]{%
10	0.119187442650273\\
20	0.032974524855721\\
30	0.0105340641682724\\
40	0.00356935409524054\\
50	0.00124910132833928\\
60	0.000446138897708349\\
70	0.000161606839822465\\
80	5.91461944973359e-05\\
90	2.18179322385659e-05\\
100	8.09837505409091e-06\\
110	3.02109641978799e-06\\
120	1.13170529669451e-06\\
130	4.25417487286127e-07\\
140	1.60393232167692e-07\\
150	6.06271942668057e-08\\
};
\end{axis}
\end{tikzpicture}%
\caption{$\hat Q_N(a)$ (solid line) vs.\ $\check Q_N(a)$ (dashed line).}
 \end{subfigure}
 \hfill
 \begin{subfigure}{.49\linewidth}\centering
%
%
\definecolor{mycolor1}{rgb}{0.00000,0.44700,0.74100}%
\definecolor{mycolor2}{rgb}{0.85000,0.32500,0.09800}%
\definecolor{mycolor3}{rgb}{0.92900,0.69400,0.12500}%
\begin{tikzpicture}

\begin{axis}[%
width=0.951\figurewidth,
height=\figureheight,
at={(0\figurewidth,0\figureheight)},
scale only axis,
xmin=10,
xmax=100,
xlabel={$N$},
ymin=1,
ymax=1.5,
ylabel={$\check Q_N(a)\,\big/\,\hat Q_N(a)$},
axis background/.style={fill=white},
legend style={legend cell align=left,align=left,draw=white!15!black}
]
\addplot [color=mycolor1,solid,mark=|,mark options={solid}]
  table[row sep=crcr]{%
10	1.40574272495823\\
20	1.24054065361036\\
30	1.17377429952567\\
40	1.13690336486158\\
50	1.11194291522023\\
60	1.09782090052301\\
70	1.08437291144746\\
80	1.07545263009362\\
90	1.06715801804802\\
100	1.05892498974909\\
110	1.06749332177595\\
120	1.03105392215421\\
130	1.06548233812839\\
140	1.01591539050359\\
150	1.05684174967254\\
};
\addlegendentry{exponential};

\addplot [color=mycolor2,solid,mark=triangle,mark options={solid,rotate=180}]
  table[row sep=crcr]{%
10	1.47904148350376\\
20	1.28739940980992\\
30	1.20953583206765\\
40	1.16617834784159\\
50	1.13677682032473\\
60	1.11924411788766\\
70	1.10319566125033\\
80	1.09106910670637\\
90	1.08164340508608\\
100	1.07759769791556\\
110	1.06551724892711\\
120	1.0634422079568\\
130	1.06139067149798\\
140	1.03713328843866\\
150	1.05242095908105\\
};
\addlegendentry{deterministic};

\addplot [color=mycolor3,solid,mark=o,mark options={solid}]
  table[row sep=crcr]{%
10	1.33300401366259\\
20	1.19289357484049\\
30	1.13717578901992\\
40	1.1086561945154\\
50	1.0906076770625\\
60	1.07575411410137\\
70	1.06988970422023\\
80	1.05977771899903\\
90	1.04984757186825\\
100	1.04506447631598\\
110	1.00569121830492\\
120	0.960700591421485\\
130	1.11951970338454\\
140	1.09858378197049\\
150	1.09545248070541\\
};
\addlegendentry{Pareto};

\end{axis}
\end{tikzpicture}%
\caption{Ratio of $\check Q_N(a)$ divided by $\hat Q_N(a)$.}
 \end{subfigure}
\caption{\small Comparison of crude Monte Carlo estimators $\hat Q_N(a)$  and the approximation $\check Q_N(a)$ as provided in Prop.\ \ref{thm:6.1}. Parameters are chosen as $E=0.5$, $p=0.75$, $\lambda_1=1$ and $\lambda_2=5$, with $a=1.6$ for deterministic, $a=1.4$ for exponential and $a=1.2$ for Pareto service times.}
\label{fig:ratio2}
\end{figure}

\begin{table}[h!]
\vspace{1cm}
\caption{\label{TaQ2}\small Parameters are chosen as in Table \ref{TaQ}, with arrival rate parameters $p=0.75$, $\lambda_1=1$ and $\lambda_2=5$ (so that the expected arrival rate is $2$).}
\begin{tabular}{>{\small}C{0.8cm}>{\small}C{0.8cm}>{\small}C{0.6cm}>{\small}C{1.1cm}>{\small}C{0.6cm}>{\small}C{0.6cm}>{\small}C{3.2cm}>{\small}C{3cm}}
\midrule
$F$ & $\ve$ & $E$  & $a\left(\ve\right)$ & $N\overline a$ & $\lceil M_1\rceil$ & $\left[\hat Q_N\left(a\left(\ve\right)\right)\pm \frac{\mbox{\sc ci}}{2}\right]\big/\ve$ & $\left(\check Q_N(\underline a),\check Q_N(\overline a)\right)\big/\ve$\\
\midrule
\multirow{6}{*}{\rotatebox[origin=c]{90}{\parbox[c]{2.3cm}{\centering Exponential}}} 
& \multirow{3}{*}{$10^{-3}$} 
& $0.05$ & $0.2662$ & $27$ & $10$ &$0.7501\pm 0.0017$ & $(1.4061,0.8115)$ \\
&& $0.5$ & $1.2991$ & $130$ & $87$ & $0.9002\pm 0.0019$ & $(1.2266,0.9787)$\\
&& $1$ & $1.8061$ & $181$ & $127$ & $0.8576\pm 0.0018$ & $(1.1182,0.9307)$\\
\cmidrule{2-8}
& \multirow{3}{*}{$10^{-4}$} 
& $0.05$ & $0.3056$ & $31$ & $10$ & $0.7199\pm 0.0053$ & $(1.4107,0.7615)$\\
&& $0.5$ & $1.3942$ & $140$ & $87$ & $0.8089\pm 0.0056$ & $(1.1124,0.8601)$  \\
&& $1$&  $1.9234$ & $193$ & $127$ & $0.8230\pm 0.0056$ & $(1.0742,0.8717)$\\
\midrule
\multirow{6}{*}{\rotatebox[origin=c]{90}{\parbox[c]{2.3cm}{\centering Deterministic}}}
& \multirow{3}{*}{$10^{-3}$} 
& $0.05$ & $0.3012$ & $31$ & $10$ & $0.6173\pm 0.0015$ & $(1.0539,0.6640)$ \\
&& $0.5$ &$1.5438$ & $155$ & $100$ & $0.8215\pm 0.0018 $ & $(1.0708,0.8934)$ \\
&& $1$ & $2.7487$ & $275$ & $200$ & $0.9035\pm 0.0019$ & $(1.1232,0.9827)$ \\
\cmidrule{2-8}
& \multirow{3}{*}{$10^{-4}$}
& $0.05$ & $0.3484$ & $35$ & $10$ & $0.8783\pm 0.0058$ & $(1.5388,0.9209)$\\
&& $0.5$ & $1.6632$ & $167$ & $100$ & $0.8187\pm 0.0056$ & $(1.0669,0.8690)$\\
&& $1$ & $2.9094$ & $291$ & $200$ & $0.9316\pm 0.0060$ & $(1.1532,0.9905)$\\
\midrule
\multirow{6}{*}{\rotatebox[origin=c]{90}{\parbox[c]{2.3cm}{\centering Pareto(2)}}} 
& \multirow{3}{*}{$10^{-3}$} 
& $0.05$ & $0.2461$ & $25$ & $10$ & $0.7264\pm 0.0017$ & $(1.4490,0.7888)$\\
&& $0.5$ & $1.0381$ & $104$ & $67$ & $0.8755\pm 0.0018$ & $(1.2856,0.7069)$\\
&& $1$ & $1.4671$ & $147$ & $100$ & $0.8651\pm 0.0018$ & $(1.1606,0.9393)$\\
\cmidrule{2-8}
& \multirow{3}{*}{$10^{-4}$}
& $0.05$ & $0.2817$ & $29$ & $10$ & $0.5315\pm 0.0045$ & $(1.1255,0.5649)$\\
&& $0.5$ & $1.1200$ & $113$ & $67$ & $0.6948\pm 0.0052$ & $(1.0002,0.7408)$\\
&& $1$ & $1.5688$ & $157$ & $100$ & $0.9138\pm 0.0059$ & $(1.2335,0.9709)$\\
\midrule
\end{tabular}
\end{table}

Specifically, we assume that $\pp(X_i=\lambda_1)=p$ and $\pp(X_i=\lambda_2)=1-p=:\overline p$, where $p$ is typically substantially larger than $\frac{1}{2}$. A routine calculation shows that
\[
\Lambda_X(\vartheta)=\log\left(p e^{\vartheta\lambda_1}+\bar pe^{\vartheta\lambda_2}\right),\:\:
\Lambda_X'(\vartheta)=\frac{\lambda_1p e^{\vartheta\lambda_1}+\lambda_2\bar pe^{\vartheta\lambda_2}}{p e^{\vartheta\lambda_1}+\bar pe^{\vartheta\lambda_2}},\:\:
\Lambda_X''(\vartheta)=\frac{p\bar p (\lambda_1-\lambda_2)^2 e^{\vartheta(\lambda_1+\lambda_2)}}{\left(p e^{\vartheta\lambda_1}+\bar pe^{\vartheta\lambda_2}\right)^2}.
\]
As before, we evaluate the approximation provided in Prop.~\ref{thm:6.1} numerically. The obtained approximations and the corresponding Monte Carlo estimates are depicted in Fig.~\ref{fig:ratio2}. The counterpart to Table \ref{TaQ} is Table \ref{TaQ2}, where the  parameters are chosen as in Section \ref{sec:AboutQ:PoisX} (we put $\lambda_1=1$, $\lambda_2=5$ and $p=0.75$ so that the mean arrival rate is $2$  as before). Compared to the previous example, it seems that here the required number of servers is overall somewhat larger  due to the greater variance of the $X_i$. The ordering of the service time distributions in terms of the required number of servers remains the same as before: the queuing system with deterministic service times requires the largest number of servers. 

\section{Conclusion}

In this paper we considered an infinite-server queue with doubly stochastic Poisson arrivals, where the arrival rate is resampled every $N^{-\alpha}$ time units. 
Among the main contributions of the paper are exact (non-logarithmic, that is) asymptotic expressions for $P_N(a)$, namely the tail distribution of the number of arrivals at a given time (for $\alpha>3$ or $\alpha<\frac 1 3$), as well as for $Q_N(a)$, for which we consider the tail probability of having more than $N a$ customers in the system (for the case $\alpha=1$). 

As we saw for the specific example of exponentially distributed arrival rates, the asymptotic expression for $P_N(a)$ can have a rather intricate shape for $\alpha\in\left[\frac 1 2,2\right]$. We do, however, believe that it is possible to derive the asymptotics for the cases $\alpha\in\left[\frac 1 3,\frac 1 2\right)$ and $\alpha\in(2,3]$ by using more precise bounds based on the Berry-Esseen inequality. 

In numerical examples we showed how the approximation for $Q_N(a)$ can be useful when determining the required number of servers such that at a specific time $t$ (e.g.\ a certain time of the day) a specific performance target is met.  
This staffing rule could be extended to one that achieves the desired performance level during an extended period of time, rather than at a single time point. 
We expect that this requires more refined techniques, since the staffing level at a certain point in time affects the number of customers present in the subsequent time interval.
However, we feel that the procedure developed in this paper may serve as a reasonably accurate proxy.

Finally, we believe that it is possible to extend the results of the paper by relaxing the assumption that the arrival rates are independent and identically distributed. 
Instead, one could consider the situation in which the arrival rates in subsequent time intervals depend on each other in a Markovian fashion.
Another interesting topic relates to the infinite-server model in which the random rate of the arrival process changes continuously (rather than being redrawn periodically, and then being valid for the rest of the interval); in this context we could for instance consider a Coxian arrival process with a shot-noise rate \cite{KMB17}.

{\small
\section*{Acknowledgments and affiliations} The authors are with Korteweg-de Vries Institute for Mathematics,
University of Amsterdam, Science Park 904, 1098 XH Amsterdam, the Netherlands. J.\ Kuhn is also with
The University of Queensland,
St Lucia, Queensland, Australia, and is supported by Australian Research Council (ARC) grant DP130100156.
M.\ Mandjes is also with  {\sc Eurandom}, Eindhoven University of Technology, Eindhoven, the Netherlands,
and Amsterdam Business School, Faculty of Economics and Business, University of Amsterdam,
Amsterdam, the Netherlands. The research of M.\ Heemskerk and M.\ Mandjes is partly funded by NWO Gravitation project {\sc Networks}, grant number 024.002.003. 

\noindent
{\sc Email.} {\tiny \{\,{\tt j.m.a.heemskerk|j.kuhn|m.r.h.mandjes}\,\}\,{\tt @uva.nl}}.}

\bibliographystyle{plain}
\bibliography{biblioExAsPois}

\end{document}